\patchcmd{\@settitle}{\uppercasenonmath\@title}{\Large}{}{}
\patchcmd{\@setauthors}{\MakeUppercase}{\large}{}{}
\patchcmd{\section}{\scshape}{\large}{}{}
\tikzset{> =stealth}
\theoremstyle{plain}
\newtheorem{theorem}{Theorem}[section]
\newtheorem{lemma}[theorem]{Lemma}
\newtheorem{proposition}[theorem]{Proposition}
\newtheorem{corollary}[theorem]{Corollary}
\theoremstyle{definition}
\newtheorem{definition}[theorem]{Definition}
\theoremstyle{remark}
\newtheorem{remark}[theorem]{Remark}
\newtheorem{example}{Example}
\newcommand{\R}{\mathbb{R}}
\newcommand{\Q}{\mathbb{Q}}
\newcommand{\op}{{^\mathrm{op}}}
\newcommand{\comp}{^\mathsf{c}}
\newcommand{\pl}{\mathrm{pl}}
\renewcommand{\L}{\mathcal{L}}
\newcommand{\M}{\mathcal{M}}
\newcommand{\Nvar}{\mathcal{N}}
\renewcommand{\U}{\mathcal{U}}
\newcommand{\V}{\mathcal{V}}
\newcommand{\W}{\mathcal{W}}
\newcommand{\Wvar}{\mathscr{W}}
\newcommand{\Fvar}{\mathscr{F}}
\newcommand{\Frm}{\mathrm{Frm}}
\newcommand{\StrZdBiFrm}{\mathrm{Str0DBiFrm}}
\newcommand{\Top}{\mathrm{Top}}
\renewcommand{\C}{\mathds{C}}
\renewcommand{\C}{\mathds{C}}
\newcommand{\Clat}{\mathds{C}_\mathsf{lat}}
\renewcommand{\P}{\mathfrak{F}}
\newcommand{\Sk}{\mathrm{Sk}}
\newcommand{\Spec}{\Sigma}
\newcommand{\sob}{\mathrm{sob}}
\newcommand{\downsets}{\mathcal{D}}
\newcommand{\filt}{\mathcal{F}}
\newcommand{\ideal}{\mathscr{I}}
\newcommand{\st}{\mathrm{st}}
\newcommand{\pprec}{\prec\!\prec}
\begin{document}

\title{The congruence biframe as a quasi-uniform bicompletion}
\author[G. Manuell]{Graham Manuell}
\address{School of Mathematics, University of Edinburgh, Edinburgh EH9 3FD, United Kingdom}
\email{graham@manuell.me}
%\date{2018}

\subjclass[2010]{06D22, 06B10, 54E15, 54E55.}
\keywords{congruence frame, assembly, dissolution locale, strictly zero-dimensional biframe, quasi-uniform frame, well-monotone quasi-uniformity}

\begin{abstract}
 Künzi and Ferrario have shown that a $T_0$ space is sober if and only if it is bicomplete in the well-monotone quasi-uniformity. We prove a pointfree version of this result: a
 strictly zero-dimensional biframe is a congruence biframe if and only if it is bicomplete in the same quasi-uniformity. As a corollary we obtain a new proof of a result of Plewe
 that a congruence frame is ultraparacompact. The main result makes use of a new construction of the bicompletion of a quasi-uniform biframe as a quotient of the Samuel compactification.
\end{abstract}

\maketitle

\setcounter{section}{-1}
\section{Introduction}

In \cite{manuell2018strictly0d} we described how both the category $\Frm$ of frames and the opposite of the category $\Top_0$ of $T_0$ spaces embed into the category $\StrZdBiFrm$
of strictly zero-dimensional biframes. Here frames are represented by their congruence biframes and these form a coreflective subcategory of $\StrZdBiFrm$.

The $T_0$ spaces are represented by their Skula biframes, which form a reflective subcategory of $\StrZdBiFrm$. By sandwiching the congruential coreflector $\C\P$ between
the Skula functor $\Sk$ and its left adjoint left inverse $\Sigma_1$ we obtain a spatial `shadow' of the congruential coreflection. This is none other than sobrification.
\begin{center}
\begin{tikzpicture}[node distance=3.4cm, auto]
  \node (TL) {$\StrZdBiFrm$};
  \node (TR) [right of=TL, xshift=0.5cm] {$\StrZdBiFrm$};
  \node (BL) [below of=TL] {$\Top_0\op$};
  \node (BR) [below of=TR] {$\Top_0\op$};
  \draw[->] (BL) to node {$\Sk$} (TL);
  \draw[<-] (BR) to [swap] node {$\Spec_1$} (TR);
  \draw[->] (TL) to node {$\C\P$} (TR);
  \draw[->] (BL) to node [swap] {$\sob\op$} (BR);
\end{tikzpicture}
\end{center}
This suggests that perhaps congruence biframes should be thought of as the sober strictly zero-dimensional biframes. More evidence for this is given in \cite{manuell2018strictly0d}
where we prove a characterisation of congruence biframes that is reminiscent of the definition of sober spaces as those $T_0$ spaces where every irreducible closed set is the closure of a point.

Künzi and Ferrario \cite{kunzi1991bicompleteness} show that a $T_0$ space is sober if and only if it is bicomplete with respect to the well-monotone quasi-uniformity and that the sobrification is given by bicompletion.
In view of the above we might suspect that the same is true of the congruential coreflection of a strictly zero-dimensional biframe. (This is reasonable since bicompleteness of
quasi-uniform frames is a stronger condition than the classical notion.)

\section{Background}

For background on frames, congruence frames and quasi-uniform biframes see \cite{picado2012book} and \cite{frith1987thesis}. For background on the Fletcher construction for frames see \cite{ferreira2004fletcher}.
For further background on congruences and strictly zero-dimensional biframes see \cite{manuell2015thesis}.

\subsection{Frames and biframes}

A \emph{frame} is a complete lattice satisfying the frame distributivity condition
$x \wedge \bigvee_{\alpha \in I} y_\alpha = \bigvee_{\alpha \in I} (x \wedge y_\alpha)$ for arbitrary families $(y_\alpha)_{\alpha \in I}$.
We denote the smallest element of a frame by $0$ and the largest element by $1$. A \emph{frame homomorphism} is a function between frames which preserves
finite meets and arbitrary joins.

A \emph{biframe} is a triple of frames $\L = (\L_0, \L_1, \L_2)$ where $\L_1$ and $\L_2$ are subframes of $\L_0$ which together generate $\L_0$.
The frame $\L_0$ is called the \emph{total part} of $\L$, while $\L_1$ and $\L_2$ are called the \emph{first part} and the \emph{second part} respectively.
We will sometimes use $i$ and $j$ to index the parts of a biframe, in which case we tacitly assume $i, j \in \{1,2\}$ and $i \ne j$.
A \emph{biframe homomorphism} $h$ is a frame homomorphism $h_0$ between the total parts which preserves the first and second parts. The restrictions of
a biframe homomorphism $h$ to the first and seconds parts are denoted by $h_1$ and $h_2$ respectively.

If $\L$ is a biframe and $x \in \L_i$, then the \emph{biframe pseudocomplement} of $x$ is given by $x^\bullet = \bigvee\{y \in \L_j \mid x \wedge y = 0\}$.
The \emph{rather below relation} $\prec_i$ on $\L_i$ is defined by $x \prec_i y \iff x^\bullet \vee y = 1$. A relation $R$ on a set $X$ is said to \emph{interpolate} if $R \subseteq R \circ R$,
that is, if whenever $a R c$ there is an element $b \in X$ such that $a R b$ and $b R c$. Let $\pprec_i$ be the largest interpolative relation contained in $\prec_i$.
We say a biframe is \emph{regular} if for all $x \in \L_i$, $x = \bigvee \{y \in \L_i \mid y \prec_i x\}$ and \emph{completely regular} if for all $x \in \L_i$, $x = \bigvee \{y \in \L_i \mid y \pprec_i x\}$.
A biframe is \emph{zero-dimensional} if each part $\L_i$ is generated by the elements $x \in \L_i$ for which $x \vee x^\bullet = 1$. A frame $L$ is \emph{regular}, \emph{completely regular} or
\emph{zero-dimensional} if and only if the biframe $(L,L,L)$ is, and if a biframe is regular, completely regular or zero-dimensional, then so is its total part.

A frame homomorphism $f$ is \emph{dense} if $f(x) = 0 \implies x = 0$. In the category of regular frames dense frame homomorphisms are monic.

A biframe homomorphism $f$ is said to be \emph{surjective} if $f_1$ and $f_2$ are surjective, and \emph{dense} if $f_0$ is dense.
A quotient of a biframe is uniquely determined by the corresponding quotient of the total part.

\subsection{Congruences and strictly zero-dimensional biframes}

A \emph{(frame) congruence} on a frame $L$ is an equivalence relation on $L$ that is also a subframe of $L \times L$. A \emph{lattice congruence} on $L$ is an equivalence relation that is only a
sub\emph{lattice} of $L \times L$.

The (frame) congruence $\nabla_a = \{(x,y) \in L \times L \mid x \vee a = y \vee a\}$ is called a \emph{closed congruence}, while
the congruence $\Delta_a = \{(x,y) \in L \times L \mid x \wedge a = y \wedge a\}$ is called an \emph{open congruence}. Both closed and open congruences are frame congruences,
but they can be generated \emph{as lattice congruences} by the pairs $(0,a)$ and $(a,1)$ respectively.
More generally, a lattice congruence generated by a single pair $(a,b)$ is called a \emph{principal congruence} and such a lattice congruence is even a frame congruence. We have
$\langle (a,b) \rangle = \langle (a \wedge b, a \vee b) \rangle = \nabla_{a \vee b} \cap \Delta_{a \wedge b}$.

The lattice of all frame congruences $\C L$ on a frame $L$ is itself a frame and the assignment $\nabla_L\colon a \mapsto \nabla_a$ is an injective
frame homomorphism. The lattice of lattice congruences $\Clat L$ is also a frame. In both $\C L$ and $\Clat L$ the congruences $\nabla_a$ and $\Delta_a$ are complements of each other.
And since each principal congruence is a meet of an open and a closed congruence, the open and closed congruences together generate the frames in question.

A frame $L$ is called \emph{fit} if every frame congruence on $L$ is a join of open congruences. Fitness is implied by regularity.

A biframe $\L$ is \emph{strictly zero-dimensional} \cite{banaschewski1990strictly0d} if every element $a \in \L_1$ is complemented in $\L_0$ with its complement $a\comp$ lying in $\L_2$, and
moreover, these complements generate $\L_2$. It is easy to see that $\C L$ becomes the total part of a strictly zero-dimensional biframe $(\C L, \nabla L, \Delta L)$ where
$\nabla L \cong L$ is the subframe of closed congruences and $\Delta L$ is the subframe generated by the open congruences.

The frame $\Clat L$ becomes a zero-dimensional biframe in similar way, but it is not strictly zero-dimensional in general. The first part, which is generated by the closed congruences, is easily seen to be
isomorphic to the frame $\ideal L$ of ideals on $L$, while the second part is isomorphic to the frame of filters $\filt L$.

Besides congruence biframes, notable examples of strictly zero-dimensional biframes include the \emph{Skula biframes} defined in \cite{banaschewski1990strictly0d}.
These are precisely the spatial strictly zero-dimensional biframes.

The congruence biframe construction is functorial. If $f\colon L \to M$ we define $\C f (C)$ to be the congruence on $M$ generated by $(f \times f)(C)$. It is easy to see that $\C f$ is part preserving.
In fact, $\C$ is left adjoint to the functor which takes first parts of strictly zero-dimensional biframes (see \cite{manuell2018strictly0d} for details). The functor $\C$ is fully faithful and so we have a
coreflection $\chi_\L\colon \C \L_1 \to \L$ from strictly zero-dimensional frames to congruence biframes. The map $\chi_\L$ is a dense surjection and so the strictly zero-dimensional biframes
with first part $L$ are precisely the dense quotients of $\C L$.

\subsection{Quasi-uniform biframes}

A \emph{pair-covering downset} $U$ (\emph{paircover} for short) on a biframe $\L$ is a downset of $\L_1 \times \L_2$ that satisfies $\bigvee_{(x,y) \in U} x \wedge y = 1$.
A paircover is said to be \emph{strong} if it is generated as a downset by pairs $(x,y)$ satisfying $x \wedge y \ne 0$.

The \emph{star} of $x \in \L_i$ with respect to the paircover $U$ is $\st_i(x,U) = \bigvee\{ u_i \mid (u_1,u_2) \in U,\, x \wedge u_j \ne 0 \}$.
Then set $U^* = {\downarrow}\{ (\st_1(u_1, U), \st_2(u_2, U)) \mid (u_1,u_2) \in U \}$.

Let $\U$ be a set of paircovers on $\L$. We define the \emph{$\U$-uniformly below relation} on $\L_i$ by $u \vartriangleleft_i^\U v \iff \exists U \in \U.\ \st_i(u,U) \le v$.
We say that $\U$ is \emph{admissible} if for all $x \in \L_i$, $x = \bigvee \{y \in \L_i \mid y \vartriangleleft_i^\U x \}$.

A \emph{quasi-uniformity} on a biframe $\L$ is a filter $\U$ of paircovers such that
\begin{itemize}
  \item for each $U \in \U$ there is a strong paircover $V \in \U$ below it,
  \item for each $U \in \U$ there is a $V \in \U$ such that $V^* \le U$,
  \item $\U$ is admissible.
\end{itemize}
The elements of $\U$ are called \emph{uniform paircovers}.

\begin{remark}
 Our definition of covering quasi-uniformity differs slightly from the usual one. Firstly, we restrict our paircovers to be downsets so that we can simply use inclusion of downsets instead of
 dealing with the refinement preorder. More importantly, even accounting for the fact that we use downsets, our definition of strong paircover is nonstandard. By the usual definition, the paircover
 $\{(0,0)\}$ on the terminal biframe $\mathbbm{1}$ is explicitly defined to be strong. But this means that $\{\{(0,0)\}\}$ gives a second quasi-uniformity on $\mathbbm{1}$,
 in addition to the expected one $\{\{\}, \{(0,0)\}\}$. \emph{So only with our modified definition do we have equivalence with the entourage approach.}
\end{remark}

A \emph{quasi-uniform homomorphism} $f\colon (\L,\U) \to (\M,\V)$ is a biframe homomorphism such that $\filt\downsets(f_1 \times f_2)(\U) \le \V$
where $\downsets$ is the downset functor and $\filt$ is the filter functor. Such a map is said to be a \emph{quasi-uniform surjection} if the underlying biframe homomorphism is surjective
and $\filt\downsets(f_1 \times f_2)(\U) = \V$.

A quasi-uniform biframe $(\L, \U)$ is \emph{symmetric} if $\L_1 = \L_2$ and $\U$ has a base of paircovers of the form $U_C = \langle (u,u) \mid u \in C\rangle$.
In this case we may deal directly with the covering downsets $C$ instead of the paircovers $U_C$ and the quasi-uniformity axioms reduce to the axioms for a uniformity on the frame $L = \L_0$.
See \cite{frith1987thesis} or \cite{picado2012book} for the explicit axiomatisation.
\begin{remark}
 As before, our definition does actually differ slightly from the ones given there. We require uniformities to have a base of strong covering downsets, where a covering downset is strong if it is generated by nonzero elements. \emph{Even though it is usually omitted, this axiom (or a similar one) is necessary to recover the theory of entourage uniformites in the case of the trivial frame.}
\end{remark}
The \emph{symmetrisation} $S \L$ of the quasi-uniform biframe $\L = (\L, \U)$ is given by the uniform frame $(\L_0, \U_s)$ where $\U_s$ is the uniformity generated by
covers of the form $\langle u\wedge v \mid (u,v) \in U\rangle$ for $U \in \U$.

We call a paircover $C$ \emph{transitive} if $C = C^*$. A quasi-uniformity is transitive if it has a base of transitive paircovers. A biframe admits a transitive quasi-uniformity if and only if it is zero-dimensional.
In the symmetric case, it turns out that a strong covering downset is transitive if and only if it is generated by a partition.

We will call a paircover \emph{finite} if it is finitely generated.
A quasi-uniform biframe is \emph{totally bounded} if every uniform paircover has a finite sub-paircover.
This is equivalent to the quasi-uniformity having a base of finite paircovers \cite{fletcher1993totallybounded}.
The totally bounded coreflection $B \L$ of a quasi-uniform biframe $\L = (\L, \U)$ is obtained by equipping $\L$ with the quasi-uniformity generated by the finite paircovers of $\U$.

Quasi-uniformities are closely related to \emph{quasi-proximities}.
A quasi-proximity or \emph{strong inclusion} \cite{schauerte1993compactifications} on a biframe $\L$ is a pair of relations $(\vartriangleleft_1, \vartriangleleft_2)$ on $\L_1$ and $\L_2$ satisfying:
\begin{itemize}
 \item $a \le b \vartriangleleft_i c \le d \implies a \vartriangleleft_i b$,
 \item $\vartriangleleft_i$ is a sublattice of $\L_i \times \L_i$,
 \item $a \vartriangleleft_i b \implies a \prec_i b$,
 \item $\vartriangleleft_i$ interpolates,
 \item $a \vartriangleleft_i b \implies a^\bullet \vartriangleleft_j b^\bullet$ for $j \ne i$,
 \item for all $x \in \L_i$ we have $x = \bigvee \{y \in \L_i \mid y \vartriangleleft_i x \}$.
\end{itemize}
Any biframe which admits a quasi-proximity is completely regular and $(\pprec_1,\pprec_2)$ is the largest quasi-proximity on a completely regular biframe.
If the biframe is compact (i.e.\ its total part is compact) or strictly zero-dimensional, then  $\prec_i$ already interpolates \cite{schauerte1995normality}
and we may write the largest quasi-proximity as $(\prec_1, \prec_2)$.

If $\U$ is a quasi-uniformity, then $(\vartriangleleft^\U_1, \vartriangleleft^\U_2)$ is a quasi-proximity and every quasi-proximity is of this form.
Two quasi-uniformities induce the same quasi-proximity if and only if they have the same totally bounded coreflection.

\subsection{Bicompletion}

A (quasi-)uniform (bi)frame $\L$ is \emph{(bi)complete} if every dense (quasi-)uniform surjection $f\colon \M \twoheadrightarrow \L$ is an isomorphism.
A quasi-uniform biframe $\L$ has a unique \emph{bicompletion} $\gamma_\L\colon C \L \twoheadrightarrow \L$ where $C \L$ is bicomplete and $\gamma_\L$ is
a dense quasi-uniform surjection (see \cite{frith1999bicompletion,kriz1986completion}). Note that bicompletion is functorial, preserves symmetry and commutes with symmetrisation.

Complete uniform frames $(L, \U)$ are characterised by the condition that the only covering downset $D$ which satisfies
$(\exists U \in \U.\ {\downarrow} a \cap U \subseteq D) \implies a \in D$ is $D = {\downarrow} 1$.

A quasi-uniform biframe is compact if and only if it is bicomplete and totally bounded. The bicompletion of the totally bounded coreflection of a quasi-uniform biframe is a
compactification of the underlying biframe and is known as the \emph{Samuel compactification}.
The Samuel compactification can be constructed directly as a biframe of uniformly regular ideals. See \cite{frith2009samuel} for details.

In the other direction, a compact regular biframe admits a unique quasi-uniformity and this in turn induces a totally bounded quasi-uniformity on
any of its quotients. In this way every compactification arises as a Samuel compactification and we may distinguish compactifications by the quasi-uniformities (or the quasi-proximities) which they induce
(see \cite{schauerte1993compactifications}).

\subsection{Fletcher covers}

Let $\L$ be a strictly zero-dimensional biframe. For $A \subseteq \L_1$ we set $C_A = \bigcap_{a \in A} \left({\downarrow}(a,1) \cup {\downarrow}(1,a\comp)\right)$.
A cover $A$ of $\L_1$ (not necessarily a downset) is called a \emph{Fletcher cover} if $C_A$ is a paircover. Such a paircover need not be strong, but we can easily modify it to yield the strong
paircover $\widetilde{C}_A = {\downarrow}\{(x,y) \in C_A \mid x \wedge y \ne 0\}$.

Under certain conditions the strong paircovers corresponding to a family of Fletcher covers will form a base for a transitive quasi-uniformity on $\L$.
This is discussed extensively in \cite{ferreira2004fletcher} in the case that $\L$ is a congruence biframe, but the theory requires no serious modifications to work for general strictly zero-dimensional biframes.
We are interested in two families of Fletcher covers which are proven there to satisfy the necessary conditions: finite covers and well-ordered covers.

The family of finite covers gives rise to what is known as the \emph{Pervin} or \emph{Frith} quasi-uniformity, which is the finest totally bounded quasi-uniformity on $\L$.
The corresponding quasi-proximity is $(\prec_1,\prec_2)$ and its bicompletion gives the compact regular coreflection of the underlying biframe.

The family of well-ordered covers yields the \emph{well-monotone} quasi-uniformity. We give a simpler description of this quasi-uniformity in the next section.

\section{Preliminary results}

Before we can prove the main results we will need a few general results on bicompletions, ultraparacompactness and the well-monotone quasi-uniformity.

\subsection{The well-monotone quasi-uniformity}\label{section:well-monotone}
As we have seen, the well-monotone quasi-uniformity on a strictly zero-dimensional biframe $\L$ has basic paircovers $\widetilde{C}_A$ for each well-ordered cover $A \subseteq \L_1$.
The following lemma gives a useful alternative description of these paircovers.

\begin{lemma}\label{lem:well_monotone_quasiuniformity_base}
 Suppose $A$ is a join-closed well-ordered cover of $\L_1$. Then $\widetilde{C}_A = {\downarrow}\{ (a^+,a\comp) \mid a \in A \setminus \{1\} \}$, where $a^+$ denotes the successor of $a$ in $A$.
 These paircovers form a base for the well-monotone quasi-uniformity on $\L$.
\end{lemma}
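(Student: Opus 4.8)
The plan is to prove the lemma in two stages: first to pin down $\widetilde{C}_A$ explicitly when $A$ is a \emph{join-closed} well-ordered cover, and then to deduce the ``base'' statement by replacing an arbitrary well-ordered cover with its join-closure. Throughout I use that a join-closed $A$ contains $0 = \bigvee\varnothing$ and $1 = \bigvee A$, and that, being well-ordered, $A$ is a chain, so each $a\in A\setminus\{1\}$ has an immediate successor $a^+$ in $A$.

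For the explicit description I would first rewrite membership in $C_A$: since $a\comp$ is the complement of $a$ in $\L_0$ and $q\in\L_2$, we have $q\le a\comp \iff q\wedge a = 0$, so $(p,q)\in C_A$ precisely when, for every $a\in A$, either $p\le a$ or $q\wedge a = 0$. The inclusion ${\downarrow}\{(a^+,a\comp)\mid a\in A\setminus\{1\}\}\subseteq\widetilde{C}_A$ is then a short check: for $a\in A\setminus\{1\}$ and $b\in A$, if $b\not\ge a^+$ then $b\le a$ (as $A$ is a chain and $a^+$ is the successor of $a$), so $a\comp\le b\comp$ since complementation in the frame $\L_0$ reverses order; and $a^+\wedge a\comp\ne 0$, for otherwise $a^+ = a^+\wedge(a\vee a\comp) = a^+\wedge a\le a$. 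For the reverse inclusion, take $(p,q)\in C_A$ with $p\wedge q\ne 0$, let $m$ be the least element of $A$ above $p$ (it exists since $1\in A$ and $A$ is well-ordered, and $m\ne 0$ because $p\wedge q\ne 0$ forces $p\ne 0$), and set $p_0 = \bigvee\{b\in A\mid b<m\}$, a join over a nonempty set (it contains $0$), so $p_0\in A$ by join-closedness. The crucial point is that $p_0<m$: if $p_0 = m$, then since every $b\in A$ with $b<m$ satisfies $p\not\le b$ and hence $q\wedge b = 0$, frame distributivity gives $q\wedge m = q\wedge\bigvee\{b\in A\mid b<m\} = 0$, so $p\wedge q\le m\wedge m\comp = 0$, a contradiction. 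Thus $p_0$ is the predecessor of $m$, i.e.\ $m = p_0^+$, and since $q\wedge p_0 = 0$ and $p\le m = p_0^+$ with $p_0<1$, we conclude $(p,q)\le(p_0^+,p_0\comp)$, as needed.

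For the base statement, I would take any well-ordered cover $A$ and let $\overline A$ be its closure under arbitrary joins. Then $\overline A$ is still a cover, it is join-closed, and $A\subseteq\overline A$ gives $C_{\overline A}\subseteq C_A$ and hence $\widetilde{C}_{\overline A}\subseteq\widetilde{C}_A$. It remains to check that $\overline A$ is well-ordered. Enumerating $A$ as $(a_\xi)_{\xi<\kappa}$ in increasing order, every element of $\overline A$ equals $\bigvee S$ for some $S\subseteq A$, and $\bigvee S = \bigvee\{a_\xi\mid a_\xi\le\bigvee S\}$, where $\{\xi<\kappa\mid a_\xi\le\bigvee S\}$ is an initial segment of $\kappa$; hence $\overline A$ is precisely the image of $\kappa+1$ under the order-preserving map $\lambda\mapsto\bigvee_{\xi<\lambda}a_\xi$. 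Such an image is linearly ordered, and a strictly descending sequence in it would pull back to one in $\kappa+1$, which is impossible; so $\overline A$ is well-ordered. Since the paircovers $\widetilde{C}_A$ over all well-ordered covers $A$ form a base for the well-monotone quasi-uniformity, and each of them contains the paircover $\widetilde{C}_{\overline A}$ coming from the join-closed well-ordered cover $\overline A$, the paircovers of the stated form also constitute a base.

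I expect the main obstacle to be the reverse inclusion in the first stage: one must isolate the correct ``cut'' $m$ in $A$, and the key observation — which is exactly where both join-closedness and the strongness condition $p\wedge q\ne 0$ enter — is that $m$ cannot be a join-limit of $A$ when $p\wedge q\ne 0$, since that would force $p$ and $q$ below the complementary elements $m$ and $m\comp$. Everything else is bookkeeping, the one remaining ingredient being the standard fact that the join-closure of a well-ordered subset of a frame is again well-ordered.
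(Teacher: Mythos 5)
Your proof is correct and follows essentially the same route as the paper: both arguments identify the generators of $\widetilde{C}_A$ by locating, for a strong pair $(p,q)$, the least element of $A$ above $p$ and using $p\wedge q\ne 0$ together with join-closedness to force that element to be a successor, and both deduce the base statement by passing to the join-closure of an arbitrary well-ordered cover. The only cosmetic difference is that the paper first computes the generators of $C_A$ for arbitrary $A$ and then specialises, whereas you verify the two inclusions directly in the join-closed case.
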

\begin{proof}
First observe that for any $A \subseteq \L_1$ we have
\begin{align*}
 C_A &= \bigcap_{a \in A} \left({\downarrow}(a,1) \cup {\downarrow}(1,a\comp)\right) \\
     &= \{(x,y) \in \L_1\times \L_2 \mid x \in \bigcap_{a \in B} {\downarrow} a \text{ and } y \in \bigcap_{a \in B\comp} {\downarrow} a\comp
        \text{ for some } B \subseteq A \} \\
     &= \{(x,y) \in \L_1\times \L_2 \mid x \le \bigwedge_{a \in B} a \text{ and } y \le \bigwedge_{a \in B\comp} a\comp
        \text{ for some } B \subseteq A \} \\
     &= \langle \big(\bigwedge B,\big(\bigvee B\comp\big)\comp\big) \mid B \subseteq A\rangle,
\end{align*}
where we write $B\comp$ for $A \setminus B$.
Thus, $\widetilde{C}_A = \langle \big(\bigwedge B,(\bigvee B\comp)\comp\big) \mid B \subseteq A,\, \bigwedge B \nleq \bigvee B\comp\rangle$.

Now suppose $A$ is well-ordered. Consider a subset $B \subseteq A$ in the above expression for $\widetilde{C}_A$.
We may assume $B$ is nonempty. Then since $A$ is well-ordered, $B$ has a least element $b$.
Fixing $b$ and letting $B$ vary, we can minimise $\bigvee B^c$ by choosing $B$ as large as possible: $B = {\uparrow}b \cap A$.
Then $\bigvee B^c$ is the join (in $\L_1$) of all elements of $A$ strictly less than $b$, which we will denote by $b^-$.
Thus, $\widetilde{C}_A = \langle (b,(b^-)\comp) \mid b \in A,\, b \ne b^- \rangle$.

But it is not hard to see that closing an arbitrary well-ordered cover under joins yields a new well-ordered cover that has the same associated strong paircover.
Thus, we may restrict to the case that $A$ is closed under arbitrary joins. In this case $b^- \in A$ and so $b \ne b^-$ precisely when the index of $b$ in $A$ is a successor ordinal.
Hence, if $A$ is a join-closed well-ordered cover, then $\widetilde{C}_A = \langle (a^+,a\comp) \mid a \in A \setminus \{1\}\rangle$.
\end{proof}

We note that the well-monotone quasi-uniformity is inherited by quotient biframes.
\begin{lemma}\label{lem:well_monotone_quasiuniformity_hereditary}
 Let $\L$ be strictly zero-dimensional and let $q\colon \L \twoheadrightarrow \M$ be a biframe quotient. If we equip $\L$ and $\M$ with the well-monotone quasi-uniformity, then $q$ becomes a quasi-uniform surjection.
\end{lemma}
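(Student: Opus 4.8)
The plan is to show that $q$ pushes each basic well-monotone paircover of $\L$ to a basic (or finer) well-monotone paircover of $\M$, and conversely that every basic well-monotone paircover of $\M$ is reached this way, so that the equality $\filt\downsets(q_1 \times q_2)(\U_{\L}) = \U_{\M}$ of filters holds. Since the well-monotone quasi-uniformity on a strictly zero-dimensional biframe has the base described in \cref{lem:well_monotone_quasiuniformity_base}, it suffices to work with paircovers of the form $\widetilde{C}_A$ for join-closed well-ordered covers $A$ of the relevant first part.

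First I would record how $q$ interacts with the strictly zero-dimensional structure. Because $q\colon \L \twoheadrightarrow \M$ is a biframe quotient, $\M$ is again strictly zero-dimensional, $q_1$ is a surjection $\L_1 \twoheadrightarrow \M_1$ preserving arbitrary joins, and $q$ respects complements in the sense that $q_0(a\comp) = q_1(a)\comp$ for $a \in \L_1$ (the complement in $\M_0$ of $q_1(a)$ lies in $\M_2$ by strict zero-dimensionality, and is the image of $a\comp$). For the forward inclusion, take a join-closed well-ordered cover $A$ of $\L_1$; then $q_1(A) = \{q_1(a) \mid a \in A\}$ is a well-ordered cover of $\M_1$ (its order type is a quotient of that of $A$, hence well-ordered, and it covers since $q_1$ preserves joins and $q_1(1)=1$), and closing it under joins — which by \cref{lem:well_monotone_quasiuniformity_base} does not change the associated strong paircover — keeps it join-closed. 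Using the description $\widetilde{C}_A = {\downarrow}\{(a^+, a\comp) \mid a \in A\setminus\{1\}\}$ and the fact that $q_1$ sends the successor $a^+$ in $A$ to something $\ge$ the successor of $q_1(a)$ in the join-closure of $q_1(A)$, one checks that $(q_1\times q_2)\bigl(\widetilde{C}_A\bigr)$ generates, as a downset, a paircover refining $\widetilde{C}_{q_1(A)}$; hence $\filt\downsets(q_1\times q_2)(\U_{\L}) \le \U_{\M}$, which is precisely the statement that $q$ is a quasi-uniform homomorphism.

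For the reverse inclusion, which gives surjectivity of the quasi-uniform map, I would start from a join-closed well-ordered cover $D$ of $\M_1$ and lift it: since $q_1$ is onto, choose for each $d \in D$ some $\tilde d \in \L_1$ with $q_1(\tilde d) = d$, and let $A$ be the join-closure in $\L_1$ of $\{\tilde d \mid d \in D\} \cup \{1\}$, well-ordered by any well-ordering refining the order induced from $D$. Then $A$ is a join-closed well-ordered cover of $\L_1$ with $q_1(A) \supseteq D$, and chasing the successor description again shows $(q_1\times q_2)\bigl(\widetilde{C}_A\bigr)$ generates a downset below $\widetilde{C}_D$ up to the filter, so $\widetilde{C}_D \in \filt\downsets(q_1\times q_2)(\U_{\L})$; as these $\widetilde{C}_D$ form a base for $\U_{\M}$, we get $\U_{\M} \le \filt\downsets(q_1\times q_2)(\U_{\L})$, and combined with the previous paragraph the equality follows.

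The main obstacle I anticipate is bookkeeping around successors: the successor of $a$ in the well-order $A$ need not map under $q_1$ to the successor of $q_1(a)$ in $q_1(A)$ — distinct elements of $A$ can collapse, and one must pass to the join-closure of the image to make the successor structure behave — so the correct statement is only an inequality/refinement between $(q_1\times q_2)(\widetilde{C}_A)$ and $\widetilde{C}_{q_1(A)}$ (after join-closing), not an equality of generating sets. Handling this cleanly, and likewise arranging the lift $A$ in the reverse direction so that its well-order is compatible with $D$'s, is the delicate part; everything else is a routine transport of the base in \cref{lem:well_monotone_quasiuniformity_base} along the surjections $q_1$ and $q_2$ together with preservation of complements.
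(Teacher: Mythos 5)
Your forward direction is essentially the paper's argument: the image $q_1(A)$ of a join-closed well-ordered cover is again a join-closed well-ordered cover (in fact already join-closed, since $q_1$ preserves joins and every subset of $q_1(A)$ is the image of a subset of $A$), and $\widetilde{C}_{q_1(A)}$ lies below the image of $\widetilde{C}_A$; your worry about collapsing successors is legitimate but resolvable, e.g.\ by taking for each $d \in q_1(A)$ the largest $a \in A$ with $q_1(a)=d$. The genuine gap is in the lifting step. A well-ordered cover in the sense used in \cref{lem:well_monotone_quasiuniformity_base} is a subset of $\L_1$ that is a \emph{chain under the frame order}, well-ordered by that order: the successor $a^+$ and the pairs $(a^+,a\comp)$ only make sense for such a chain, and the computation in the proof of \cref{lem:well_monotone_quasiuniformity_base} explicitly uses totality when it identifies $\bigvee B\comp$ with the join of the elements strictly below $b$. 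An arbitrary choice of preimages $\tilde d$ of the elements of $D$ need not form a chain in $\L_1$, and neither does the join-closure of such a set (two incomparable preimages stay incomparable after adjoining joins). Imposing ``any well-ordering refining the order induced from $D$'' externally does not make $A$ a well-ordered cover in the required sense, so $\widetilde{C}_A$ as you construct it is not a basic paircover of the well-monotone quasi-uniformity on $\L$.

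The repair is to choose the preimages monotonically. The paper takes $\tilde d = q^1_*(d)$, where $q^1_*$ is the right adjoint of $q_1$: since $q^1_*$ is monotone and $q_1 q^1_* = \mathrm{id}$ by surjectivity of $q_1$, the set $q^1_*(D)$ is a chain order-isomorphic to $D$, hence well-ordered, and it is a cover because it contains $q^1_*(1) = 1$; closing it under joins yields a join-closed well-ordered cover $B'$ of $\L_1$ with $q_1(B') = D$, after which your successor-chasing goes through. (Cumulative joins $\bigvee_{d' \le d} \tilde d'$ of arbitrary preimages would serve equally well.) With that substitution your outline coincides with the paper's proof.
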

\begin{proof}
 We first show $q$ is a quasi-uniform homomorphism. Let $A$ be a join-closed well-ordered cover of $\L_1$. Then $q_1(A)$ is a join-closed well-ordered cover of $\M_1$ and $\widetilde{C}_{q_1(A)}$ lies below the image of
 $\widetilde{C}_A$, as required.
 
 Next suppose $B$ is a join-closed well-ordered cover of $\M_1$. Writing $q^1_*$ for the right adjoint of $q_1$, we note that $q^1_*(B)$ is well-ordered. It is a cover since $1 \in B$. Then
 closing it under joins gives a well-ordered cover $B'$, which is sent to $B$ under $q_1$. Hence, $\widetilde{C}_{q(B')} \le \downsets (q_1 \times q_2)(\widetilde{C}_B)$ and so $q$ is a quasi-uniform surjection.
\end{proof}

\subsection{Ultraparacompactness}

A completely regular frame $L$ has largest uniformity, which is called the \emph{fine uniformity}. It is well known that $L$ is \emph{paracompact} if and only if it is complete in the fine uniformity,
but I could not find a published account of the following zero-dimensional analogue of this result. Our proof follows the proof for paracompactness given in \cite{picado2012book}.

\begin{definition}
 A frame is \emph{ultraparacompact} if every cover is refined by a partition.
\end{definition}
\begin{definition}
 The \emph{fine transitive} uniformity on a zero-dimensional frame is the uniformity generated by all (downsets of) partitions.
\end{definition}
\begin{proposition}\label{prop:ultraparacompactness}
 Let $L$ be a completely regular frame. The following are equivalent:
 \begin{enumerate}
  \item $L$ is ultraparacompact,
  \item The fine uniformity on $L$ is complete and transitive,
  \item $L$ is zero-dimensional and complete in the fine transitive uniformity,
  \item $L$ admits a complete transitive uniformity.
 \end{enumerate}
\end{proposition}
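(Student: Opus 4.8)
The plan is to establish the cyclic chain $(1)\Rightarrow(2)\Rightarrow(3)\Rightarrow(4)\Rightarrow(1)$. One elementary observation will be used repeatedly: \emph{if $L$ is regular and, for every $x\in L$, the cover $\{b^\bullet,x\}$ is refined by a partition for some $b$ with $b\prec x$, then $L$ is zero-dimensional.} Indeed, if $\{p_k\}_k$ is such a partition and $J=\{k : p_k\wedge b\ne 0\}$, then no $p_k$ with $k\in J$ lies below $b^\bullet$ (else $p_k\wedge b=0$), so $c:=\bigvee_{k\in J}p_k\le x$; moreover $c$ is complemented, with complement $\bigvee_{k\notin J}p_k$, and $b=\bigvee_{k\in J}(b\wedge p_k)\le c$. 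Since $L$ is completely regular, hence regular, $x=\bigvee\{b\in L : b\pprec x\}\le\bigvee\{b\in L : b\prec x\}$, so $x$ is a join of complemented elements below it.

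For $(1)\Rightarrow(2)$, ultraparacompactness together with this observation gives zero-dimensionality, so the downsets of partitions form an admissible transitive uniformity; since every cover is refined by a partition it contains every covering downset, hence is the fine uniformity, which is accordingly transitive. Completeness follows from the characterisation of complete uniform frames recalled in the Background: given a covering downset $D$ with the stated closure property, refine the cover $D$ by a partition $P$; as $D$ is a downset $P\subseteq D$, so ${\downarrow} 1\cap{\downarrow} P={\downarrow} P\subseteq D$, and the closure property applied with $a=1$ yields $1\in D$, i.e.\ $D={\downarrow} 1$. For $(2)\Rightarrow(3)$, transitivity of the fine uniformity forces zero-dimensionality directly: by admissibility each $x\in L$ is the join of the elements $y$ for which $\st(y,P)\le x$ for some partition $P$ in the uniformity, and $\st(y,P)$ is then a complemented element with $y\le\st(y,P)\le x$. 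Consequently the fine transitive uniformity is admissible, hence contained in the fine uniformity, while the fine uniformity, being transitive, has a base of partitions, all of which lie in the fine transitive uniformity; so the two coincide and the fine transitive uniformity is complete. Finally $(3)\Rightarrow(4)$ is immediate.

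The substantial step is $(4)\Rightarrow(1)$. Let $\U$ be a complete transitive uniformity on $L$, let $A$ be an arbitrary cover of $L$, and set
\[
 D_A=\{\,x\in L \mid \{p\wedge x \mid p\in P\}\text{ refines }A\text{ for some partition }P\text{ of }L\,\}.
\]
This is a downset and it is covering, since $A\subseteq D_A$ (take $P=\{1\}$). To verify the closure property, suppose ${\downarrow} x\cap U\subseteq D_A$ for some $U\in\U$; by transitivity choose a uniform cover $Q={\downarrow}\{u_i\}_i$, the downset of a partition $\{u_i\}_i$, with $Q\subseteq U$. Since $u_i\wedge x\le u_i$ and $u_i\wedge x\le x$, each $u_i\wedge x$ lies in ${\downarrow} x\cap U\subseteq D_A$, say witnessed by a partition $P_i$; then $P:=\{p\wedge u_i \mid i,\ p\in P_i\}$ is again a partition of $L$, and $\{q\wedge x \mid q\in P\}$ refines $A$ by construction, so $x\in D_A$. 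By the completeness characterisation $D_A={\downarrow} 1$, so $1\in D_A$ and $A$ is refined by a partition. As $A$ was arbitrary, $L$ is ultraparacompact.

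The main obstacle is finding the right $D_A$. The tempting route of showing directly that every cover of $L$ is $\U$-uniform cannot work, since a complete transitive uniformity need not be the fine transitive one — for instance the metric uniformity on Baire space is complete and transitive but does not contain every cover. One must instead decouple the covering downset from $\U$, letting it range over \emph{all} partitions of $L$, so that $\U$ enters the argument only through its transitivity (to localise along a partition) and through the abstract completeness criterion.
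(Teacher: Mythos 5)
Your proof is correct and uses essentially the same argument as the paper: the key step is the same construction of the covering downset $D_A$ from partitions, the same pasting of local partitions $P_u$ along a partition refining a uniform cover, and the same appeal to the completeness criterion. The only (harmless) difference is that you close the cycle at $(4)\Rightarrow(1)$ rather than $(3)\Rightarrow(1)$, which costs one extra use of transitivity to replace an arbitrary uniform cover by a uniform partition, and you spell out the easy implications in more detail.
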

\begin{proof}
 It is easy to see that $(1) \implies (2) \implies (3) \iff (4)$, since if every cover is uniform then $L$ is complete and if a uniformity is complete then so is any finer one.
 We show $(3) \implies (1)$.
 
 Let $A$ be a covering downset. We set $D_A = \{a \in L \mid \exists P \text{ a partition}.\ {\downarrow} a \cap {\downarrow} P \subseteq A\}$. Note that $A \subseteq D_A$ and so $D_A$ is also a covering downset.
 Now take $x \in L$ and a partition $U$ and suppose that ${\downarrow} x \cap {\downarrow} U \subseteq D_A$. We will show that $x \in D_A$.
 
 For each $u \in U$ there is a partition $P_u$ such that ${\downarrow} (x \wedge u) \cap {\downarrow} P_u \subseteq A$. Define $\overline{P} = \{u \wedge p \mid p \in P_u, u \in U \}$.
 We quickly see that the elements of $\overline{P}$ are pairwise disjoint, since the elements of $U$ and those of each $P_u$ are. And $\overline{P}$ is a cover since $U$ and each $P_u$ are covers.
 Thus, $\overline{P}$ is a partition. 
 But ${\downarrow} x \cap {\downarrow} \overline{P} = \bigcup_{u \in U} {\downarrow} x \cap {\downarrow} u \cap {\downarrow} P_u \subseteq A$ and hence $x \in D_A$ as required.
 
 But now by the completeness criterion, we have that $U_A = {\downarrow} 1$. Thus, there is a partition $P$ such that $P \subseteq A$ and we have shown that $L$ is ultraparacompact.
\end{proof}

\begin{remark}
 Also compare the result of \cite{banaschewski2001strongly0d} that a completely regular frame is \emph{strongly zero-dimensional} if and only if its fine uniformity is transitive.
\end{remark}

\subsection{Bicompletion and the Samuel compactification}

It will be useful to have a construction of the bicompletion of a quasi-uniform biframe as a quotient of its Samuel compactification.
The proof of the spatial analogue in \cite{brummer2002bicompactification} is highly categorical and it applies equally well to the pointfree setting, but it will be useful to have a more explicit
description of the quotient. Our proof will follow that of \cite{banaschewski1989samuel} where the result is proved in the case of uniform frames.
\begin{proposition}\label{prop:bicompletion_via_samuel_compactification}
 Let $\L = (\L, \U)$ be a quasi-uniform biframe and let $\rho\colon C B \L \twoheadrightarrow \L$ be its Samuel compactification.
 For each $U \in \U$ set $K_U = \bigcup_{(u,v) \in U} {\downarrow} (\rho^1_*(u), \rho^2_*(v)) \in \downsets(\L_1 \times \L_2)$ and set $k_U = \bigvee_{(x,y) \in K_U} x \wedge y$.
 
 The underlying biframe of the bicompletion $C \L$ is the quotient of $C B \L$ by the congruence $\Theta_\L = \bigvee_{U \in \U} \Delta_{k_U}$. Call the quotient map $\nu$.
 The images of the downsets $K_U$ under $\nu_1 \times \nu_2$ form a base for the quasi-uniformity on $C \L$ and the bicompletion morphism $\gamma \colon C \L \twoheadrightarrow \L$
 is given by factoring $\rho$ through $\nu$.
\end{proposition}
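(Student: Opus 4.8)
The plan is to follow the argument of \cite{banaschewski1989samuel}. Write $\bar K_U := {\downarrow}(\nu_1\times\nu_2)(K_U)$. I will construct the biframe $C\L := CB\L/\Theta_\L$ together with the quasi-uniformity $\V$ having base $\{\bar K_U \mid U\in\U\}$, produce the factorisation $\gamma$, and then verify that $\gamma$ is a dense quasi-uniform surjection and that $C\L$ is bicomplete; the statement will then follow from uniqueness of the bicompletion. For the factorisation: since $\rho$ is a surjection we have $\rho_i\rho^i_* = \mathrm{id}$, so, as $\rho_0$ preserves finite meets and all joins, $\rho_0(k_U) = \bigvee_{(u,v)\in U}\rho_1\rho^1_*(u)\wedge\rho_2\rho^2_*(v) = \bigvee_{(u,v)\in U}u\wedge v = 1$. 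Hence $\Delta_{k_U}\subseteq\ker\rho_0$ for every $U$, so $\Theta_\L\subseteq\ker\rho_0$ and $\rho_0$ factors as $\gamma_0\nu_0$; restriction to the parts yields a biframe homomorphism $\gamma\colon C\L\to\L$, surjective because $\gamma_i\nu_i=\rho_i$ is and dense because $\rho_0$ is. From $\Delta_{k_U}\subseteq\Theta_\L$ we get $\nu_0(k_U)=1$, so each $\bar K_U$ is a paircover of $C\L$; and a short calculation gives $(\gamma_1\times\gamma_2)(\bar K_U)=U$, so that $\filt\downsets(\gamma_1\times\gamma_2)(\V)=\U$ once $\V$ is known to be a quasi-uniformity.

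That $\V$ is a quasi-uniformity is checked as follows. It is a filter base because $\U$ is and $V\subseteq U$ implies $K_V\subseteq K_U$. A strong $V\subseteq U$ in $\U$ gives a strong $\bar K_V\subseteq\bar K_U$, because $\gamma_0\nu_0(\rho^1_*(u)\wedge\rho^2_*(v))=u\wedge v\ne 0$ forces $\nu_0(\rho^1_*(u)\wedge\rho^2_*(v))\ne 0$. If $V^*\le U$ in $\U$ then, using that by density of $\gamma$ one has $\bar a\wedge\bar b\ne 0$ if and only if $\gamma_1(\bar a)\wedge\gamma_2(\bar b)\ne 0$, one obtains $\st_i(\nu_i\rho^i_*(u),\bar K_V)\le\nu_i\rho^i_*(\st_i(u,V))$ and hence $\bar K_V^*\le\bar K_U$. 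For admissibility I would first note that $\nu$ is a quasi-uniform homomorphism from $CB\L$, with its unique quasi-uniformity $\W$, to $(C\L,\V)$; this rests on the explicit description of $\W$ via the Samuel compactification of $B\L$, together with the fact that the finite paircovers of $\L$ all belong to $\U$. Then, since $CB\L$ is compact and regular, $x=\bigvee\{z\in(CB\L)_i\mid z\prec_i x\}$ and $\prec_i$ coincides with $\vartriangleleft_i^\W$; so for $z\prec_i x$ we have $z\vartriangleleft_i^\W x$, whence $\nu_i(z)\vartriangleleft_i^\V\nu_i(x)$, and therefore $\nu_i(x)=\bigvee\{\nu_i(z)\mid z\prec_i x\}\le\bigvee\{\bar y\mid\bar y\vartriangleleft_i^\V\nu_i(x)\}$. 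Thus $(C\L,\V)$ is a quasi-uniform biframe and $\gamma$ a dense quasi-uniform surjection.

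It remains, and this is the main obstacle, to prove that $(C\L,\V)$ is bicomplete. Since bicompletion commutes with symmetrisation, it is enough to show that the uniform frame $((C\L)_0,\V_s)$ is complete, which I would establish via the completeness criterion recalled above. So let $D\subseteq(C\L)_0$ be a covering downset such that $({\downarrow}\bar a\cap\bar K_U^s\subseteq D)\implies\bar a\in D$ for some $U\in\U$, where $\bar K_U^s={\downarrow}\{\nu_0(\rho^1_*(u)\wedge\rho^2_*(v))\mid(u,v)\in U\}$ is the symmetrised cover; we must show $D={\downarrow}1$. Pull $D$ back to $D':=\nu_0^{-1}(D)\subseteq(CB\L)_0$. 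Using the standard base of the symmetrised Samuel uniformity $\W_s$, which arises from the finite paircovers of $\L$ (all of which lie in $\U$), the hypothesis on $D$ translates directly into the completeness-criterion property for $D'$ in the compact — hence complete — uniform frame $((CB\L)_0,\W_s)$. The delicate point is to see that $D'$ is still a \emph{covering} downset: this is not formal, because $\nu_0$ is far from injective and $\Theta_\L=\bigvee_U\Delta_{k_U}$ is a genuinely infinitary join of open congruences, so from $(\bigvee D',1)\in\Theta_\L$ one cannot conclude that $(\bigvee D',1)$ lies in any single $\Delta_{k_U}$; nevertheless it can be extracted from the compactness of $CB\L$ together with the upward-directedness of the family $\{\Delta_{k_U}\}_{U\in\U}$, exactly as in \cite{banaschewski1989samuel}. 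Granting this, completeness of $((CB\L)_0,\W_s)$ forces $D'={\downarrow}1$, whence $1=\nu_0(1)\in D$ and $D={\downarrow}1$. Therefore $(C\L,\V)$ is bicomplete, and the uniqueness of the bicompletion identifies it with the bicompletion of $\L$ and $\gamma$ with $\gamma_\L$.
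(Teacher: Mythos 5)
Your construction of the quotient, the factorisation $\gamma$, and the verification that $\{\bar K_U\}$ is a base for a quasi-uniformity on $C\L$ all match the paper's argument and are fine. The problem is the bicompleteness step, where you depart from \cite{banaschewski1989samuel} while claiming to follow it, and the point you yourself flag as ``delicate'' is in fact a genuine gap that your proposed fix does not close. You need $D' = \nu_0^{-1}(D)$ to be a covering downset, i.e.\ that $\nu_0(\bigvee D') = 1$ forces $\bigvee D' = 1$ on Cauchy-closed downsets. But $(\bigvee D',1)\in\Theta_\L=\bigvee_U\Delta_{k_U}$ is a membership in a \emph{join of frame congruences}, which is not the union of the $\Delta_{k_U}$ even though that family is directed: directed unions of frame congruences are only lattice congruences, and the join requires transfinite saturation under arbitrary joins (this is exactly the content of the paper's Lemma~\ref{lem:frame_congruence_generated_by_lattice_congruence}). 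So one cannot extract a single $U$ with $k_U\le\bigvee D'$, only a transfinite chain $\bigvee D'=y_0\le y_1\le\dots\le y_\gamma=1$ with $y_{\alpha+1}\wedge k_{U_\alpha}\le y_\alpha$, and compactness of $(CB\L)_0$ gives no purchase on such a chain ($\bigvee D'$ is a single element, not a cover, and there is nothing to reduce to a finite subfamily). Moreover $\bigvee D'\ne 1$ really does happen for downsets in general --- the class of $1$ under $\Theta_\L$ is a large filter whenever the quasi-uniformity is not totally bounded --- so the covering property has to come from the Cauchy-closure hypothesis on $D$, and you have not used it for this purpose. In effect, this step \emph{is} the theorem.

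Banaschewski and Pultr do not argue this way, and neither does the paper. Their route is to first prove a functoriality statement: for any dense quasi-uniform surjection $f\colon\M\twoheadrightarrow\Nvar$, the isomorphism $CBf$ between the Samuel compactifications carries $\Theta_\M$ onto $\Theta_\Nvar$ (the key computation being $(\rho_\Nvar)^i_* = (CBf)_i(\rho_\M)^i_*f^i_*$ together with $f^i_*f_i(v)\le\st_i(v,V')$ for dense $f$), hence restricts to an isomorphism $\widetilde C\M\cong\widetilde C\Nvar$ of the candidate completions. Bicompleteness of $\widetilde C\L$ then follows by a short diagram chase: given a dense quasi-uniform surjection $f\colon\M\twoheadrightarrow\widetilde C\L$, left-cancellability of the dense map $\gamma_\L$ gives $f\gamma_\M=\widetilde C(\gamma_\L f)$, which is an isomorphism, so $f$ is a split epi and a mono, hence an isomorphism. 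If you want to salvage your completeness-criterion approach you would have to prove the covering property of $D'$ directly, which amounts to an analysis of the congruence class of $1$ under $\Theta_\L$; as written, the appeal to ``compactness plus directedness, exactly as in \cite{banaschewski1989samuel}'' does not correspond to anything in that reference and does not constitute a proof.
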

\begin{proof}
 To show that $\rho$ factors through $\nu$ to give a map $\gamma$, we need that $\rho$ sends each $k_U$ to $1$, but this follows immediately from the fact that $\rho_i \rho^i_* (x) = x$ for all $x \in \L_i$.
 Consequently, we have that both $\nu$ and $\gamma$ are dense biframe quotients.
 
 Suppose $f\colon (\M, \V) \twoheadrightarrow (\Nvar, \W)$ is a dense quasi-uniform surjection. Then so is $B f$ and it follows easily from the uniqueness of bicompletions that $C B f$ is an isomorphism.
 We show that this restricts to an isomorphism between $\widetilde{C} \M = C B \M / \Theta_\M$ and $\widetilde{C} \Nvar = C B \Nvar / \Theta_\Nvar$.
 By naturality, $f \rho_\M = \rho_\Nvar C B f$. Taking $i^\text{th}$ parts and then right adjoints we get $(\rho_\M)^i_* f^i_* = (C B f)^i_* (\rho_\Nvar)^i_* = (C B f)_i^{-1} (\rho_\Nvar)^i_*$
 and so $(\rho_\Nvar)^i_* = (C B f)_i (\rho_\M)^i_* f^i_*$.
 From this it follows immediately that if $W \in \W$ is the image under $f$ of the paircover $V \in \V$, then $K_W$ is the image under $C B f$ of $K_{{\downarrow} (f^1_*f_1 \times f^2_*f_2)(V)}$.
 So since $f$ is a quasi-uniform surjection, every $K_W$ is the image of some $K_{V'}$. In fact, for each $V \in \V$ there is some $K_{W'}$ contained in the image of $K_V$. To see this it is enough to show
 that for every $V \in \V$ there is a $V' \in \V$ such that $K_{{\downarrow} (f^1_*f_1 \times f^2_*f_2)(V')}$ is contained in $K_V$. Choose $V'$ such that $V'^* \le V$.
 Now let $v_i \in \L_i$ and suppose $v_j \in \L_j$ is such that
 $f^i_*f_i(v_i) \wedge v_j = 0$. Applying $f_0$ we have $0 = f_if^i_*f_i(v_i) \wedge f_j(v_j) = f_i(v_i) \wedge f_j(v_j) = f_0(v_i \wedge v_j)$ and so $v_i \wedge v_j = 0$, since $f$ is dense.
 Thus, $f^i_*f_i(v_i) \le v_i^{\bullet\bullet} \le \st_i(v_i, V')$ and we may conclude that ${\downarrow} (f^1_*f_1 \times f^2_*f_2)(V') \le V'^* \le V$ and hence
 $K_{{\downarrow} (f^1_*f_1 \times f^2_*f_2)(V')} \le K_V$, as required.
 Consequently, $\C (C B f)(\Theta_\M) = \Theta_\Nvar$ and so $C B f$ restricts to a biframe isomorphism $\widetilde{C} f\colon \widetilde{C} \M \to \widetilde{C} \Nvar$.
 
 The downsets $K'_U = \downsets(\nu_1 \times \nu_2) (K_U)$ are paircovers by construction. We show they form a base for a quasi-uniformity on $\widetilde{C} \L$.
 First note that $K'_{U \cap V} \subseteq K'_U \cap K'_V$. Now observe that $\nu_0(\rho^1_*(u) \wedge \rho^2_*(v)) = 0 \iff u \wedge v = \rho_0(\rho^1_*(u) \wedge \rho^2_*(v)) = 0$
 by the density of $\nu$ and $\rho$. From this it follows easily that $K'_U$ is strong whenever $U$ is and that ${K'}_V^* = K'_{V^*}$.
 It remains to show that each element in $(\widetilde{C} \L)_i$ is a join of those uniformly below it.
 Similarly to above we can see that the uniformity on $C B \L$ is generated by paircovers of the form ${\downarrow} (\rho^1_* \times \rho^2_*)(U)$ where $U$ is a finite paircover in $\U$.
 Thus, the image under $\nu$ of any uniform paircover on $C B \L$ lies above some $K'_{U'}$. It follows that $\nu$ preserves the uniformly below relation and so we may conclude admissibility
 for $\widetilde{C} \L$ from the similar the condition on $C B \L$ and the (biframe) surjectivity of $\nu$.
 It is clear that $\gamma\colon \widetilde{C} \L \to \L$ is a dense quasi-uniform surjection by construction.
 
 We can now show that $\widetilde{C} \L$ is bicomplete. Suppose $f\colon \M \twoheadrightarrow \widetilde{C} \L$ is a dense quasi-uniform surjection and consider the following commutative diagram.
\begin{center}
  \begin{tikzpicture}[node distance=3.5cm, auto]
   \node (CM) {$\widetilde{C} \M$};
   \node (M) [right of=CM] {$\M$};
   \node (CL) [below of=CM] {$\widetilde{C} \L$};
  \node (CL2) [below of=M, node distance=1.7cm] {$\widetilde{C} \L$};
  \node (L) [right of=CL] {$\L$};
   \draw[->] (CM) to node [swap] {$\widetilde{C} (\gamma_\L f)$} (CL);
   \draw[->] (M) to node {$f$} (CL2);
   \draw[->] (CL2) to node {$\gamma_\L$} (L);
   \draw[->] (CM) to node {$\gamma_\M$} (M);
   \draw[->] (CL) to node [swap] {$\gamma_\L$} (L);
  \end{tikzpicture}
\end{center}
 Since $\gamma_\L$ is dense, it is left cancellable and we have $f \gamma_\M = \widetilde{C} (\gamma_\L f)$. But by the above, $\widetilde{C} (\gamma_\L f)$ is an isomorphism and so $f$ is a split epimorphism.
 Since $f$ is also a monomorphism, it must be an isomorphism. Thus, $\widetilde{C} \L$ is bicomplete and $\gamma_\L\colon \widetilde{C} \L \to \L$ is a bicompletion.
\end{proof}
\begin{remark}
 Note that in the above construction it suffices to only consider downsets $K_U$ where the paircover $U$ is taken from some \emph{base} for $\U$.
\end{remark}

\section{Main result}

We now have the tools necessary to show that a congruence biframe $\C L$ is bicomplete in the well-monotone quasi-uniformity $\Wvar$.
We start by constructing the Samuel compactification of $\C L$.

Since $\Wvar$ is finer than the Frith quasi-uniformity $\Fvar$ and $\Fvar$ is the finest totally bounded quasi-uniformity on $\C L$, we can see that $\Fvar$ is the totally bounded coreflection of $\Wvar$.
Thus, the Samuel compactification of $(\C L, \Wvar)$ is the universal biframe compactification of $\C L$.

Compactifications of $\C L$ are also of independent interest. It is easy to see that the frame of lattice congruences $\Clat L$ is compact and
it is shown in \cite{banaschewski1987congruence} that $\C L$ is compact if and only if it coincides with $\Clat L$. This prompts us to consider the relationship between these two frames more generally.

\begin{proposition}\label{prop:universal_compactification_of_congruence_biframe}
 The map $g\colon \Clat L \to \C L$ sending a lattice congruence to the frame congruence generated by it is a biframe homomorphism, and indeed the universal biframe compactification of $\C L$. 
\end{proposition}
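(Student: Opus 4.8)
The plan is to establish the three assertions in order --- that $g$ is a biframe homomorphism, that it is a biframe compactification, and that it is the universal one --- the last and main step being carried out by identifying the totally bounded quasi-uniformity that $g$ induces on $\C L$ with the Frith quasi-uniformity $\Fvar$. For the first assertion, note that $g$ is precisely the left adjoint of the inclusion $\C L \hookrightarrow \Clat L$, since $\langle C \rangle \subseteq D$ iff $C \subseteq D$ whenever $D$ is a frame congruence; hence $g$ preserves arbitrary joins, and it visibly preserves the top congruence. For binary meets I would write every lattice congruence as the join, in $\Clat L$, of the principal congruences generated by its elements; since principal congruences are frame congruences, on which $g$ acts as the identity and for which the meet --- just intersection of relations --- is the same in $\Clat L$ as in $\C L$, two applications of frame distributivity reduce $g(C \cap D) = g(C) \cap g(D)$ to this case. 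Finally $g$ fixes the closed and the open congruences, which generate the respective parts, so it is part-preserving, with $g_1 \colon \ideal L \to \nabla L$ and $g_2 \colon \filt L \to \Delta L$.

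For the second assertion, $\Clat L$ is compact and zero-dimensional, hence compact and regular; $g$ is dense, since a lattice congruence generating the least frame congruence is itself the least congruence (being contained in it while containing the diagonal); and $g$ is surjective on parts, since every closed congruence $\nabla_a$ and every open congruence $\Delta_a$ is in the image of $g$, and these generate $\nabla L$ and $\Delta L$. Thus $g$ is a biframe compactification of $\C L$.

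For the third assertion, recall that the universal biframe compactification of $\C L$ is the Samuel compactification of $(\C L, \Fvar)$, and that a compactification is determined up to isomorphism by the totally bounded quasi-uniformity it induces; so it is enough to show that the quasi-uniformity $\filt\downsets(g_1 \times g_2)(\V)$ induced on $\C L$ from the unique quasi-uniformity $\V$ of the compact regular biframe $\Clat L$ equals $\Fvar$. One inclusion is immediate because this quasi-uniformity is totally bounded and $\Fvar$ is the finest such. For the other, take a finite Fletcher cover $A$ of $(\C L)_1$; the same finite set of complemented congruences is a Fletcher cover of $(\Clat L)_1$, and the strong paircover $\widetilde{C}_A$ it determines in $\Clat L$ is uniform, arising as it does from a finite cover by complemented elements. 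Since $g$ fixes the closed and the open congruences and is dense --- so that the condition $x \wedge y \ne 0$ cutting down a paircover to its strong version is preserved --- the formula $C_A = \bigcap_{a \in A}\big({\downarrow}(a,1) \cup {\downarrow}(1,a\comp)\big)$ shows that the image of this $\widetilde{C}_A$ under $g_1 \times g_2$ lies inside the strong paircover $\widetilde{C}_A$ of $\C L$; hence the latter belongs to the induced quasi-uniformity. As the $\widetilde{C}_A$ with $A$ finite form a base of $\Fvar$, we obtain equality, so $g$ and the Samuel compactification of $(\C L, \Fvar)$ are compactifications inducing the same quasi-uniformity and must coincide.

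The crux is this last step, and I anticipate two sticking points. First, one must check that the Fletcher construction for finite covers by complemented elements --- presented in the paper for strictly zero-dimensional biframes --- still produces a uniform paircover on the merely zero-dimensional biframe $\Clat L$, so that the paircover $\widetilde{C}_A$ formed there is genuinely in $\V$. Second, the bookkeeping comparing the two strong paircovers $\widetilde{C}_A$ has to be done carefully: it uses both the explicit form of $C_A$ and the density of $g$ in an essential way.
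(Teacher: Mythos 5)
Your proof is correct, and the first two assertions (that $g$ is a biframe homomorphism and a compactification) are handled essentially as in the paper: the same reduction of meet-preservation to principal congruences, and the same observations about density, surjectivity and the compact zero-dimensionality of $\Clat L$. Where you genuinely diverge is the universality step. The paper works at the level of quasi-proximities: it shows directly that the strong inclusion induced by $g$ is the largest one $(\prec_1,\prec_2)$, by lifting each relation $A \prec_i B$ in $\C L$ to a relation $A' \prec_i B'$ in $\Clat L$ via an explicit computation with $\nabla$'s and $\Delta$'s. You instead work at the level of quasi-uniformities, identifying the totally bounded quasi-uniformity induced by $g$ with the Frith quasi-uniformity $\Fvar$ through the Fletcher paircovers of finite covers. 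Both routes are legitimate, since compactifications are classified equally well by their quasi-proximities or their totally bounded quasi-uniformities. Your two flagged sticking points are real but closable: the comparison of the two strong paircovers $\widetilde{C}_A$ goes through exactly as you describe (density of $g$ gives $x\wedge y \ne 0 \Rightarrow g_1(x)\wedge g_2(y)\ne 0$, and $g$ fixes the generators $\nabla_a$, $\Delta_a$); and the uniformity of $\widetilde{C}_A$ on $\Clat L$ follows because each generator $\big(\bigwedge B, (\bigvee B\comp)\comp\big)$ consists of complemented elements with complements in the opposite part, so that $\bigwedge B \prec_1 \bigwedge B$ and $(\bigvee B\comp)\comp \prec_2 (\bigvee B\comp)\comp$, placing $\widetilde{C}_A$ in the totally bounded quasi-uniformity associated with $(\prec_1,\prec_2)$, which is the unique quasi-uniformity of the compact regular biframe $\Clat L$. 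The paper's quasi-proximity argument has the advantage of avoiding this last verification entirely, and is somewhat shorter; your argument has the advantage of exhibiting concretely which paircovers of $\Clat L$ restrict to the Frith base, which meshes naturally with the subsequent use of \cref{prop:bicompletion_via_samuel_compactification}.
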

\begin{proof}
 The map $g$ clearly preserves joins. It suffices to check meet preservation on a base. Since principal lattice congruences
 are already frame congruences and meet is just intersection in both frames, $g$ preserves meets of these.
 Thus, $g$ is frame homomorphism and indeed a biframe homomorphism, since it is evidently part-preserving.
 
 Note that $g$ is dense and surjective. Since $\Clat L$ is compact and zero-dimensional (and hence regular), $g$ is a biframe compactification of $L$.
 We show it is universal by showing it induces the quasi-proximity $(\prec_1, \prec_2)$ on $\C L$.
 That is, we must show that if $A \prec_i B$ in $\C L$ then there are $A', B' \in (\Clat \L)_i$ such that $A = g(A')$, $B = g(B')$ and $A' \prec_i B'$. (The other direction always holds.)
 
 Suppose $\nabla_a \prec_1 \nabla_b$ in $\C L$. Then $\Delta_a \vee \nabla_b = 1$ in $\Clat L$, since $\Delta_a \vee \nabla_a = 1$ and $\nabla_ b \ge \nabla_a$.
 So $\nabla_a \prec_1 \nabla_b$ in $\Clat L$ and clearly $g(\nabla_a) = \nabla_a$ and $g(\nabla_b) = \nabla_b$.
 
 If $A = \bigvee_\alpha \Delta_{a_\alpha}$, $B = \bigvee_\beta \Delta_{b_\beta}$ and $A \prec_2 B$ then there is a congruence $\nabla_c$
 such that $A \wedge \nabla_c = 0$ and $B \vee \nabla_c = 1$ in $\C L$. So $A \le \Delta_c \le B$. 
 
 We now work in $\Clat L$. Let $A' = \bigvee_\alpha \Delta_{a_\alpha}$ and $B' = \bigvee\{\Delta_x \mid \Delta_x \le B\}$.
 We have $A = g(A')$ and $B = g(B')$. Now $A \le \Delta_c$ means $\Delta_{a_\alpha} \le \Delta_c$ for all $\alpha$ and
 so $A' \le \Delta_c$. Also, $\Delta_c \le B$ implies $\Delta_c \le B'$. Thus, $A' \prec_2 B'$.
\end{proof}

It will be useful to have a more explicit description of what $g$ does. The following lemma is an infinitary variant of a well-known result on joins of lattice congruences
(for example, see \cite[lemma III.1.3]{graetzer1978book}).

\begin{lemma}\label{lem:frame_congruence_generated_by_lattice_congruence}
If $C \subseteq L\times L$ is a lattice congruence, then $(a,b) \in g(C)$ if and only if there is a transfinite increasing sequence
$x_0, x_1, \dots, x_\gamma$ with $x_0 \le a, b \le x_\gamma$, $(x_\alpha, x_{\alpha+1}) \in C$ for all $\alpha$ and
$x_\beta = \bigvee_{\alpha < \beta} x_\alpha$ for limit ordinals $\beta$.
\end{lemma}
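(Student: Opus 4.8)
The plan is to characterise $g(C)$ as the smallest frame congruence containing the lattice congruence $C$ by explicitly exhibiting the set $R$ of pairs $(a,b)$ admitting such a transfinite sequence, and showing that $R$ is itself a frame congruence containing $C$ and contained in every frame congruence containing $C$. The containment in $g(C)$ is the easy direction: each step $(x_\alpha, x_{\alpha+1}) \in C \subseteq g(C)$, and since $g(C)$ is a frame congruence it is closed under the transfinite joins used at limit stages, so by transfinite induction $(x_0, x_\gamma) \in g(C)$; since $x_0 \le a \le b \le x_\gamma$ (after replacing $(a,b)$ by $(a\wedge b, a\vee b)$, which is harmless as congruences are closed under meet and join with the diagonal) we get $(a,b) \in g(C)$ because $g(C) \supseteq \nabla_{x_\gamma} \cap \Delta_{x_0}$ whenever $(x_0,x_\gamma) \in g(C)$. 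Strictly: $(a,b)\in R$ with $x_0\le a\wedge b$ and $a\vee b\le x_\gamma$ forces $(a\wedge b, a\vee b)$, hence $(a,b)$, into $g(C)$.

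For the reverse containment I would show $R$ is a frame congruence containing $C$; then minimality of $g(C)$ gives $g(C) \subseteq R$. That $C \subseteq R$ is immediate (take the two-term sequence $x_0 = a\wedge b$, $x_1 = a\vee b$, using that $C$ is a sublattice so $(a\wedge b, a\vee b)\in C$). Reflexivity, symmetry and the sublattice property of $R$ are routine: for symmetry note the sequence only witnesses the pair $(x_0,x_\gamma)$ and we may always normalise to $a\le b$; for meets and joins, given witnessing sequences for $(a,b)$ and $(c,d)$ one interleaves them, taking componentwise meets (resp.\ joins) with the fixed endpoints of the other sequence — here one uses frame distributivity to check the limit-stage join condition survives, and uses that $C$ is a sublattice to check each successor step stays in $C$. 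The one genuinely delicate point is \textbf{transitivity combined with closure under arbitrary joins}, i.e.\ that $R$ is a frame congruence and not merely a lattice congruence: given $(a,b),(b,c)\in R$ one concatenates the two transfinite sequences (re-indexing the second to start where the first ends, after joining $b$ onto its terms), and given a family $(a_\iota, b_\iota)_{\iota\in I}\in R$ one must build a single transfinite sequence witnessing $(\bigvee a_\iota, \bigvee b_\iota)\in R$ by a simultaneous transfinite recursion that at each stage "fires" one available successor step from one of the component sequences while keeping all partial joins coherent.

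The main obstacle, then, is organising this simultaneous recursion cleanly: one fixes well-orderings of the component sequences, and builds an increasing chain $y_\xi$ in $L$ where at successor stages one picks (via some bookkeeping function cycling through the indices $\iota$ and through the ordinals indexing each component sequence) a not-yet-used successor step $(x^\iota_\alpha, x^\iota_{\alpha+1})\in C$ and sets $y_{\xi+1} = y_\xi \vee x^\iota_{\alpha+1}$; since $C$ is a sublattice, $(y_\xi, y_{\xi+1}) = (y_\xi \vee x^\iota_\alpha, y_\xi \vee x^\iota_{\alpha+1})\in C$ provided $y_\xi \ge x^\iota_\alpha$, which the bookkeeping guarantees by processing each component's steps in order. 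At limit stages take joins, which lands back in the recursion by construction. The recursion terminates (the bookkeeping exhausts all steps of all components after some ordinal $\gamma$), and at that point $y_\gamma = \bigvee_\iota b_\iota$ while $y_0 = \bigvee_\iota a_\iota$, giving the required witnessing sequence. I would present the interleaving for binary meets/joins and transitivity in detail and then remark that the arbitrary-join case is the same construction with more elaborate bookkeeping, citing \cite[lemma III.1.3]{graetzer1978book} as the finitary prototype.
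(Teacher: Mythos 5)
Your proposal follows essentially the same route as the paper's proof: define the set $\widetilde{C}$ (your $R$) of pairs admitting such a transfinite sequence, observe $C \subseteq \widetilde{C} \subseteq g(C)$, and verify that $\widetilde{C}$ is a frame congruence, with closure under arbitrary joins handled by merging the witnessing sequences. For that last step the paper avoids your interleaving-with-bookkeeping recursion by simply concatenating the component sequences end to end in a single block of length $(\gamma+1)\zeta$, correcting each term $x^\delta_\alpha$ to $s_\delta \vee x^\delta_\alpha$ where $s_\delta = \bigvee_{\theta<\delta} x^\theta_\gamma \vee \bigvee_{\delta\le\theta<\zeta} x^\theta_0$, so that consecutive blocks match up automatically; your simultaneous recursion would also work but is harder to write down cleanly. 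One detail in your sketch needs repair: for transitivity, joining $b$ onto the terms of the second sequence makes it start at $b$, not at $x_\gamma$ where the first sequence ends (and $x_\gamma \ge a\vee b$ may exceed $b$), and moreover the concatenated sequence would begin at $x_0$, which need not lie below $c$ as the symmetric reading of the lemma requires; the paper's remedy is a three-block sequence, first the $x$-sequence met with $c$, then the $x'$-sequence joined with the resulting endpoint $z = x_\gamma\wedge c$, then the $x$-sequence again joined with $z' = x'_{\gamma'}\vee z$.
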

\begin{proof}
 Let $\widetilde{C}$ denote the set of all $(a,b)$ for which such a transfinite sequence exists.
 It is easily seen that $C \subseteq \widetilde{C} \subseteq g(C)$ by transfinite induction.
 We show $\widetilde{C}$ is a frame congruence.
 
 Reflexivity and symmetry are clear. For transitivity, suppose $(a,b), (b,c) \in \widetilde{C}$.
 Then there are sequences $x_0, x_1, \dots, x_\gamma$ and $x'_0, x'_1, \dots, x'_{\gamma'}$ as above for $(a,b)$ and $(b,c)$ respectively.
 We form a sequence $x_0 \wedge c, x_1 \wedge c, \dots, x_\gamma \wedge c = x'_0 \vee z, x'_1 \vee z, \dots, x'_{\gamma'} \vee z = x_0 \vee z',
 x_1 \vee z', \dots, x_\gamma \vee z'$, where $z = x_\gamma \wedge c$ and $z' = x'_{\gamma'} \vee z$. This satisfies the properties needed to
 give $(a,c) \in \widetilde{C}$.
 
 If $(a,b), (a',b') \in \widetilde{C}$ then there are sequences $x_0, x_1, \dots, x_\gamma$ and $x'_0, x'_1, \dots, x'_\gamma$
 as above (where we repeat terms as appropriate to make both sequences the same length). Let $y_\alpha = x_\alpha \wedge x'_\alpha$. Then the sequence
 $(y_\alpha)_\alpha$ is increasing and $y_0 \le a\wedge a', b\wedge b' \le y_\gamma$. Furthermore, we have $(y_\alpha,y_{\alpha+1}) \in C$ for all $\alpha$.
 If $\beta$ is a limit ordinal then $y_\beta = x_\beta \wedge x'_\beta = \bigvee_{\alpha, \alpha' < \beta} x_\alpha \wedge x'_\alpha$ and
 $\bigvee_{\alpha < \beta} y_\alpha \le \bigvee_{\alpha, \alpha' < \beta} x_\alpha \wedge x'_{\alpha'} \le
 \bigvee_{\alpha, \alpha' < \beta} y_{\max(\alpha,\alpha')} = \bigvee_{\alpha < \beta} y_\alpha$. Thus, $(a\wedge a',b\wedge b') \in \widetilde{C}$.
 
 Now suppose we have $(a^\delta,b^\delta) \in \widetilde{C}$ for each ordinal $\delta < \zeta$. We show that the join $(\bigvee_\delta a^\delta, \bigvee_\delta b^\delta) \in \widetilde{C}$.
 Let $x^\delta_0, x^\delta_1, \dots, x^\delta_\gamma$ be a sequence associated to $(a^\delta,b^\delta)$. Define $s_\delta = \bigvee_{\theta < \delta} x^\theta_\gamma \vee \bigvee_{\delta \le \theta < \zeta} x^\theta_0$.
 We construct a new sequence $(y_\beta)_{\beta \le (\gamma+1)\zeta}$ with $y_{(\gamma+1) \delta + \alpha} = s_\delta \vee x^\delta_{\alpha}$ and $y_{(\gamma+1)\zeta} = \bigvee_{\theta < \zeta} x^\theta_\gamma$.
 Then $(y_\beta)_\beta$ satisfies the conditions necessary to conclude $(\bigvee_\delta a^\delta, \bigvee_\delta b^\delta) \in \widetilde{C}$, as required.
\end{proof}

We may now use \cref{prop:bicompletion_via_samuel_compactification} to find the completion of $\C L$ inside $\Clat L$ and establish our main result.

\begin{theorem}\label{thm:congruence_biframe_bicomplete}
 A congruence biframe is complete in the well-monotone quasi-uniformity.
\end{theorem}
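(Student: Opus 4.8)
The plan is to apply \cref{prop:bicompletion_via_samuel_compactification} to $(\C L,\Wvar)$, taking for the Samuel compactification the universal biframe compactification $g\colon \Clat L \twoheadrightarrow \C L$ of \cref{prop:universal_compactification_of_congruence_biframe}, and then to prove that the congruence $\Theta = \Theta_{\C L}$ produced by that proposition coincides with the kernel of $g$. The bicompletion map $\gamma$ is then an isomorphism, so $(\C L,\Wvar)$ is its own bicompletion and hence complete.

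First I would make the elements $k_U$ explicit. By \cref{lem:well_monotone_quasiuniformity_base} and the remark after \cref{prop:bicompletion_via_samuel_compactification} it is enough to take $U = \widetilde{C}_A$ for $A = \{\nabla_s \mid s \in S\}$ with $S$ a join-closed well-ordered cover of $L$. Since closed and open frame congruences are already lattice congruences, on which $g$ restricts to the identity, the right adjoints $g^1_*$ and $g^2_*$ send $\nabla_d$ and $\Delta_d$ back to themselves in $(\Clat L)_1$ and $(\Clat L)_2$; hence $g^1_*(\nabla_{s^+})\wedge g^2_*(\Delta_s) = \nabla_{s^+}\wedge\Delta_s = \langle (s,s^+)\rangle$ in $\Clat L$, so that $k_{\widetilde{C}_A}$ is exactly the lattice congruence generated by the consecutive pairs $\{(s,s^+)\mid s\in S\setminus\{1\}\}$. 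As $g$ sends each $k_U$ to $1$, \cref{prop:bicompletion_via_samuel_compactification} already yields $\Theta \le \ker g$, and the remaining task is the reverse inclusion.

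The saturation operator of $\ker g$ sends $C$ to $g_*g(C)$, which is the frame congruence generated by $C$, so it suffices to show that every $\Theta$-saturated lattice congruence is a frame congruence. Since $\Theta$ is the join of the open congruences $\Delta_{k_U}$, whose nuclei on $\Clat L$ are $E \mapsto (k_U \to E)$, this reduces to: if a lattice congruence $E$ is not a frame congruence, then $k_U \to E \ne E$ for some $U$. Given such an $E$, \cref{lem:frame_congruence_generated_by_lattice_congruence}, after the routine reduction (replace $x_\alpha$ by $(x_\alpha\vee x_0)\wedge x_\gamma$ and delete repetitions), furnishes a strictly increasing transfinite sequence $x_0 < \dots < x_\gamma$ with $(x_\alpha,x_{\alpha+1})\in E$ for all $\alpha$, $x_\beta = \bigvee_{\alpha<\beta}x_\alpha$ at limits, and $(x_0,x_\gamma)\notin E$. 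Take $S = \{0,1\}\cup\{x_\alpha\}$, a join-closed well-ordered cover of $L$, and let $k = k_{\widetilde{C}_A}$ for $A = \{\nabla_s\mid s\in S\}$. By the formula above, $k = \nabla_{x_0}\vee G'\vee\Delta_{x_\gamma}$ in $\Clat L$, where $G'$ is the lattice congruence generated by $\{(x_\alpha,x_{\alpha+1})\mid\alpha<\gamma\}$; note $G'\subseteq E$ and also $G'\subseteq\nabla_{x_\gamma}$ since each generator has both entries below $x_\gamma$. Putting $z = \langle (x_0,x_\gamma)\rangle = \nabla_{x_\gamma}\cap\Delta_{x_0}$ and distributing $z\cap k$ over the join in $\Clat L$, the two terms involving $\nabla_{x_0}$ and $\Delta_{x_\gamma}$ vanish because $\nabla_{x_0},\Delta_{x_0}$ and $\nabla_{x_\gamma},\Delta_{x_\gamma}$ are complementary pairs in $\Clat L$, leaving $z\cap k = \Delta_{x_0}\cap G' \subseteq G' \subseteq E$. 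Since $(x_0,x_\gamma)\in z\setminus E$, this witnesses $k\to E\supsetneq E$, so $E$ is not $\Theta$-saturated.

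Hence $\Theta = \ker g$, the biframe quotient $\Clat L/\Theta$ is $\C L$, and the map $\gamma$ of \cref{prop:bicompletion_via_samuel_compactification} is an isomorphism; as that proposition identifies $\gamma$ as the bicompletion of $(\C L,\Wvar)$, the congruence biframe is bicomplete, which is the assertion of the theorem. I expect the only real work to be the explicit evaluation of the $k_U$ and the complementation-and-distributivity bookkeeping inside $\Clat L$ that forces $z\cap k \subseteq E$; everything else is assembling machinery already established above.
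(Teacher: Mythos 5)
Your proof is correct and follows the same overall strategy as the paper's: identify the Samuel compactification of $(\C L,\Wvar)$ with $g\colon\Clat L\twoheadrightarrow\C L$ via \cref{prop:universal_compactification_of_congruence_biframe}, compute the congruence $\Theta$ of \cref{prop:bicompletion_via_samuel_compactification} from the base of \cref{lem:well_monotone_quasiuniformity_base}, and use \cref{lem:frame_congruence_generated_by_lattice_congruence} to convert a witness of non-closure under $g$ into a join-closed well-ordered cover. The only real divergence is the endgame. The paper invokes fitness of $\Clat L$ to reduce the inclusion $\ker g\le\Theta$ to showing $(C,1)\in\Theta$ whenever $g(C)=1$; applying \cref{lem:frame_congruence_generated_by_lattice_congruence} to the single pair $(0,1)\in g(C)$ then produces a cover whose associated $k_U$ lies below $C$, and the argument closes in two lines. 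You instead argue on saturated elements, showing that any lattice congruence $E$ which is not a frame congruence fails to be $\Delta_k$-saturated for a suitable $k$, via the computation $z\wedge k=\Delta_{x_0}\wedge G'\subseteq E$ with $z=\langle(x_0,x_\gamma)\rangle$. This is sound --- the complementation and distributivity steps all go through in $\Clat L$, and the reduction to saturated elements correctly characterises $\ker g\le\Theta$ --- but it does strictly more work, since you must handle arbitrary $E$ rather than only those with $g(E)=1$; the payoff is that you never need fitness of $\Clat L$. Two minor points: the replacement $x_\alpha\mapsto(x_\alpha\vee x_0)\wedge x_\gamma$ is vacuous, since the sequence supplied by \cref{lem:frame_congruence_generated_by_lattice_congruence} already increases from $x_0$ to $x_\gamma$ (only the deletion of repetitions, or an appeal to transitivity of $E$ across repeated values, is actually needed); and your identification of the Samuel compactification of $(\C L,\Wvar)$ with the universal compactification $g$ tacitly uses that the totally bounded coreflection of $\Wvar$ is the Frith quasi-uniformity, which holds because $\Wvar$ refines the finest totally bounded quasi-uniformity --- this deserves a sentence.
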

\begin{proof}
 Consider the Samuel compactification $g\colon \Clat L \twoheadrightarrow (\C L, \Wvar)$.
 By \cref{prop:bicompletion_via_samuel_compactification} the bicompletion is given by factoring $g$ through a certain quotient of $\Clat L$.
 Using the base for $\Wvar$ given in \cref{lem:well_monotone_quasiuniformity_base} and noting that $g^1_*(\nabla_a) = \nabla_a$ and $g^2_*(\Delta_a) = \Delta_a$ we find that
 the kernel congruence $K$ of this quotient is generated by the identifications $\bigvee_{a \in A \setminus \{1\}} \nabla_{a^+} \wedge \Delta_a \sim 1$ as $A$ runs through
 the join-closed well-ordered covers of $L$. To show that $\C L$ is complete, we must show that $\ker g \le K$. (The other inclusion always holds.)
 In fact, since $\Clat L$ is fit, it is enough to show that if $g(C) = 1$, then $(C,1) \in K$.
 
 Suppose that $g(C) = 1$. Then by \cref{lem:frame_congruence_generated_by_lattice_congruence} there is a transfinite increasing sequence $0 = x_0, x_1, \dots, x_\gamma = 1$
 such that $(x_\alpha, x_{\alpha+1}) \in C$ and $x_\beta = \bigvee_{\alpha < \beta} x_\alpha$. Note that $(x_\alpha)_{\alpha \le \gamma}$ is a join-closed well-ordered cover of $L$.
 Hence $(\bigvee_{\alpha < \gamma} \nabla_{x_{\alpha+1}} \wedge \Delta_{x_\alpha}, 1) \in K$.
 But $\nabla_{x_{\alpha+1}} \wedge \Delta_{x_\alpha} = \langle (x_\alpha, x_{\alpha+1})\rangle \le C$ and so $(C,1) \in K$, as required.
\end{proof}

\begin{corollary}\label{cor:congruence_biframe_bicompletion}
 The congruential coreflection $\chi_\L\colon \C \L_1 \to \L$ of a strictly zero-dimensional biframe is the underlying biframe map of the bicompletion of $\L$ with respect to the well-monotone quasi-uniformity.
\end{corollary}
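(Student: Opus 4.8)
The plan is to recognise $\chi_\L$ as a dense quasi-uniform surjection whose domain is bicomplete, and then invoke the uniqueness of the bicompletion. First I would equip the congruence biframe $\C \L_1$ with its well-monotone quasi-uniformity; by \cref{thm:congruence_biframe_bicomplete} the resulting quasi-uniform biframe is bicomplete. I would also equip $\L$ with its well-monotone quasi-uniformity, which makes sense since $\L$ is strictly zero-dimensional. The congruential coreflection $\chi_\L\colon \C \L_1 \to \L$ is a dense biframe surjection, hence a biframe quotient, so by \cref{lem:well_monotone_quasiuniformity_hereditary} it is a quasi-uniform surjection for these quasi-uniformities, and it remains dense.

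Thus $\chi_\L$ is a dense quasi-uniform surjection onto $(\L, \Wvar)$ out of a bicomplete quasi-uniform biframe, i.e.\ it is a bicompletion of $(\L, \Wvar)$. By the uniqueness of the bicompletion it is isomorphic, compatibly with the maps to $\L$, to the distinguished bicompletion morphism $\gamma_\L\colon C\L \twoheadrightarrow \L$. In particular $\chi_\L$ is the underlying biframe map of $\gamma_\L$ (the statement claims nothing beyond the biframe level), which is what we want.

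I do not expect a genuine obstacle here: the content of the corollary is carried entirely by \cref{thm:congruence_biframe_bicomplete} together with the fact that the well-monotone quasi-uniformity passes to quotients (\cref{lem:well_monotone_quasiuniformity_hereditary}), both already in hand. The only points needing a moment's care are that $\chi_\L$ really is a biframe quotient --- this is the background fact that the strictly zero-dimensional biframes with first part $L$ are precisely the dense quotients of $\C L$ --- and that ``uniqueness of the bicompletion'' is being used in the form ``every dense quasi-uniform surjection from a bicomplete biframe is a bicompletion'', the same reading already exploited in the proof of \cref{prop:bicompletion_via_samuel_compactification}.
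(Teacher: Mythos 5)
Your proposal is correct and is exactly the paper's argument: equip both biframes with the well-monotone quasi-uniformity, note that $\chi_\L$ is a dense quasi-uniform surjection by \cref{lem:well_monotone_quasiuniformity_hereditary}, that its domain is bicomplete by \cref{thm:congruence_biframe_bicomplete}, and conclude by uniqueness of bicompletions. The paper states this in one line; you have merely spelled out the same steps.
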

\begin{proof}
 The congruential coreflection is a dense surjection.
 The result then follows from \cref{thm:congruence_biframe_bicomplete}, \cref{lem:well_monotone_quasiuniformity_hereditary} and the fact that bicompletions are unique up to isomorphism.
\end{proof}

The spatial analogue of this result from \cite{kunzi1991bicompleteness} now follows as a corollary.
\begin{corollary}
 The sobrification of a $T_0$ space $X$ is given by (the first part of) the bicompletion of $X$ with respect to the well-monotone quasi-uniformity.
\end{corollary}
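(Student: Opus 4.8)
The plan is to read this off from \cref{cor:congruence_biframe_bicompletion} by transporting the pointfree statement across the open-sets/spectrum adjunction. Represent the $T_0$ space $X$ by its Skula biframe $\Sk X$, which is strictly zero-dimensional with first part $(\Sk X)_1 = \Omega X$, and observe that the well-monotone quasi-uniform biframe $(\Sk X,\Wvar)$ is the pointfree avatar of the well-monotone quasi-uniform space $X$: the basic paircovers $\widetilde{C}_A$ indexed by the well-ordered covers $A \subseteq \Omega X$ correspond exactly to the entourages of $X$ generated by well-ordered open covers. Then \cref{cor:congruence_biframe_bicompletion}, applied to $\L = \Sk X$, says that the bicompletion of $(\Sk X,\Wvar)$ has underlying biframe map the congruential coreflection $\chi_{\Sk X}\colon \C(\Omega X) \to \Sk X$.

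Next I would apply the spectrum functor $\Sigma_1\colon \StrZdBiFrm \to \Top\op$ — left adjoint and left inverse to $\Sk$ — in its quasi-uniform incarnation. Since $\Sigma_1 \Sk X = X$, the biframe surjection $\chi_{\Sk X}$ is carried to a morphism of $\Top$ of the form $X \to \Sigma_1 \C(\Omega X)$; being the image of a dense surjection it is a dense (quasi-uniform) embedding, which is exactly the shape of a spatial bicompletion unit. The commuting square of the Introduction identifies $\Sigma_1 \C(\Omega X) = \Sigma_1 \C\P(\Sk X)$ with the sobrification $\sob X$ and identifies $\Sigma_1 \chi_{\Sk X}$ with the sobrification map $X \to \sob X$. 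So the only thing remaining is to confirm that $\Sigma_1$ really does send the pointfree bicompletion of $\Sk X$ to the spatial bicompletion of $X$, i.e.\ that $(\sob X,\Wvar)$ is bicomplete as a quasi-uniform space and that $X \to \sob X$ has the universal property of the bicompletion.

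This last bridge is the one genuinely nontrivial point; the rest is bookkeeping together with the Introduction's diagram. I would establish it by mimicking \cref{prop:bicompletion_via_samuel_compactification} in the spatial world: by \cite{brummer2002bicompactification} the spatial bicompletion of $X$ is obtained as a subspace of the spatial Samuel compactification of $X$ (the spatial avatar of the quotient of $CB\L$ used there), and applying $\Sigma_1$ to the verbatim pointfree construction — using \cref{prop:universal_compactification_of_congruence_biframe} to recognise the Samuel compactification of $(\C(\Omega X),\Wvar)$ as $\Clat(\Omega X)$ — shows the two constructions coincide, so the spatial bicompletion of $X$ is indeed $\Sigma_1 \C(\Omega X) = \sob X$. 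Alternatively one argues more abstractly that the quasi-uniform spectrum functor preserves bicompleteness and dense surjections, and hence carries bicompletions to bicompletions. I expect verifying this compatibility to be the main obstacle; by contrast the denseness and embedding properties of $X \to \sob X$ are routine (the well-monotone quasi-uniformity is hereditary, cf.\ \cref{lem:well_monotone_quasiuniformity_hereditary}, and sobrification induces an isomorphism on frames of opens).
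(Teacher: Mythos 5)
Your proposal is correct and follows essentially the same route as the paper: apply the spatial reflection to \cref{cor:congruence_biframe_bicompletion}, identify the spatial shadow of the congruential coreflection with sobrification via the Introduction's square, and use that the spatial reflection of the pointfree bicompletion is the classical bicompletion. The only difference is one of emphasis: the step you single out as the ``genuinely nontrivial bridge'' is simply cited by the paper as a known fact about quasi-uniform (bi)frames, with no spatial re-run of \cref{prop:bicompletion_via_samuel_compactification} needed.
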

\begin{proof}
 This follows from the fact that the spatial reflection of the congruential coreflection gives sobrification and the spatial reflection of pointfree bicompletion gives the classical bicompletion.
\end{proof}
\begin{remark}
 Our proof of \cref{thm:congruence_biframe_bicomplete} is very different from the proof of \cite[proposition 4]{kunzi1991bicompleteness} which inspired this result in the first place.
 I had first hoped to follow a similar approach to the one there, replacing irreducible closed sets by arbitrary closed sublocales and using \cite[theorem 5.1]{manuell2018strictly0d}.
 But if this can be made to work, it seems like it would require a more developed theory of pointfree clustering of generalised filters than is available at present.
\end{remark}

As another corollary we obtain the main result of \cite{plewe2002sublocale}.
\begin{corollary}\label{cor:congruence_frame_ultraparacompact}
 A congruence frame is ultraparacompact.
\end{corollary}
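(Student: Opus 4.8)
The plan is to deduce this from \cref{thm:congruence_biframe_bicomplete} and \cref{prop:ultraparacompactness}, bridging the two via symmetrisation: bicompleteness of the congruence biframe in the well-monotone quasi-uniformity will yield a \emph{complete transitive} uniformity on the frame $\C L$ itself, which is exactly what \cref{prop:ultraparacompactness} needs.

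First I would observe that the congruence biframe $(\C L,\nabla L,\Delta L)$ is strictly zero-dimensional, hence zero-dimensional, hence completely regular, and therefore so is its total part $\C L$. Thus \cref{prop:ultraparacompactness} applies to $\C L$, and by the implication $(4)\Rightarrow(1)$ it suffices to exhibit a complete transitive uniformity on $\C L$. My candidate is the symmetrisation $\Wvar_s$ of the well-monotone quasi-uniformity $\Wvar$.

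For completeness of $(\C L,\Wvar_s)$: by \cref{thm:congruence_biframe_bicomplete} the quasi-uniform biframe $(\C L,\Wvar)$ is bicomplete, and since bicompletion commutes with symmetrisation, its symmetrisation $(\C L,\Wvar_s)$ is complete. For transitivity of $\Wvar_s$: by \cref{lem:well_monotone_quasiuniformity_base}, $\Wvar$ has a base of paircovers $\widetilde C_A={\downarrow}\{(\nabla_{a^+},\Delta_a)\mid a\in A\setminus\{1\}\}$ indexed by the join-closed well-ordered covers $A$ of $L$, so $\Wvar_s$ has a base of covers of the form $\langle\nabla_{a^+}\wedge\Delta_a\mid a\in A\setminus\{1\}\rangle$. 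Each such cover is a partition: $\nabla_{a^+}\wedge\Delta_a=\langle(a,a^+)\rangle$ is nonzero since $a<a^+$, and for $a<b$ in $A$ one has $a^+\le b$, whence $\langle(a,a^+)\rangle\wedge\langle(b,b^+)\rangle\le\nabla_{a^+}\wedge\Delta_b\le\nabla_b\wedge\Delta_b=0$. Hence $\Wvar_s$ is transitive, and combining the two points, $\C L$ admits a complete transitive uniformity and so is ultraparacompact.

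The only genuinely nontrivial input is \cref{thm:congruence_biframe_bicomplete}; everything else is bookkeeping, and I do not expect a serious obstacle. The one step that warrants a word of care is deducing completeness of $(\C L,\Wvar_s)$ from bicompleteness of $(\C L,\Wvar)$: the slogan ``bicompletion commutes with symmetrisation'' supplies an isomorphism between the symmetrisation and the bicompletion of the symmetrisation, and one should check that this isomorphism is the completion map, which follows from uniqueness of completions since its source is complete and it is a dense uniform surjection. The verification that the symmetrised basic covers are genuine partitions, sketched above, is routine.
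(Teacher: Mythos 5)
Your proof is correct and follows essentially the same route as the paper: pass to the symmetrisation of the well-monotone quasi-uniformity, deduce its completeness from bicompleteness of $(\C L,\Wvar)$ via the compatibility of bicompletion with symmetrisation, note transitivity, and invoke \cref{prop:ultraparacompactness}. Your explicit verification that the symmetrised basic covers $\langle\nabla_{a^+}\wedge\Delta_a\mid a\in A\setminus\{1\}\rangle$ are partitions just spells out the transitivity claim the paper asserts in one line.
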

\begin{proof}
 Since $\C L$ is bicomplete, its symmetrisation is complete. The symmetrisation is also transitive, since the well-monotone quasi-uniformity is transitive.
 Hence the result follows by \cref{prop:ultraparacompactness}.
\end{proof}
\begin{remark}
Though the motivation was different, in hindsight we can see some similarities between our proof of \cref{thm:congruence_biframe_bicomplete} and
Plewe's proof of \cref{cor:congruence_frame_ultraparacompact} in \cite{plewe2002sublocale}. Nonetheless, in addition to our result being stronger, our proof is arguably simpler
since we were able to lean on the theory of quasi-uniformities.
\end{remark}

While congruence frames are ultraparacompact, it is not true that every strictly zero-dimensional biframe has paracompact total part.
\begin{example}
 Equip $\R$ with the upper topology. Then the Skula topology on $\R^2$ is the Sorgenfrey plane, which is not even normal.
\end{example}

On the other hand there are ultraparacompact frames which appear as the total parts of strictly zero-dimensional biframes, but never as congruence frames.
\begin{example}
 Let $Q$ be the frame of opens of $\Q$ with the usual topology. This frame has a largest pointless quotient $\pl(Q)$. (More information on largest pointless quotients can be found in \cite{isbell1991descriptive}.)
 As a quotient of $Q$, $\pl(Q)$ has a countable base. So $\pl(Q)$ is Lindelöf and zero-dimensional and hence ultraparacompact.
 It is shown in \cite{isbell1991dissolute} that $Q$ is a congruence frame, and so $\pl(Q)$ is the total part of a strictly zero-dimensional biframe.
 But in the same paper, Isbell shows that $\pl(Q)$ is not the congruence frame of any frame.
\end{example}

\bibliographystyle{abbrv}

\begin{thebibliography}{10}

\bibitem{banaschewski1990strictly0d}
B.~Banaschewski and G.~C.~L. Br{\"u}mmer.
\newblock Strong zero-dimensionality of biframes and bispaces.
\newblock {\em Quaest. Math.}, 13(3-4):273--290, 1990.

\bibitem{banaschewski2001strongly0d}
B.~Banaschewski and G.~C.~L. Br{\"u}mmer.
\newblock Functorial uniformities on strongly zero-dimensional frames.
\newblock {\em Kyungpook Math. J.}, 41(2):179--190, 2001.

\bibitem{banaschewski1987congruence}
B.~Banaschewski, J.~L. Frith, and C.~R.~A. Gilmour.
\newblock On the congruence lattice of a frame.
\newblock {\em Pacific J. Math.}, 130(2):209--213, 1987.

\bibitem{banaschewski1989samuel}
B.~Banaschewski and A.~Pultr.
\newblock {S}amuel compactification and completion of uniform frames.
\newblock {\em Math. Proc. Cambridge Philos. Soc.}, 108:63--78, 1990.

\bibitem{brummer2002bicompactification}
G.~C.~L. Br{\"u}mmer and H.-P.~A. K{\"u}nzi.
\newblock Bicompletion and {S}amuel bicompactification.
\newblock {\em Appl. Categ. Structures}, 10(3):317--330, 2002.

\bibitem{ferreira2004fletcher}
M.~J. Ferreira and J.~Picado.
\newblock A method of constructing compatible quasi-uniformities for an
  arbitrary frame.
\newblock {\em Kyungpook Math. J.}, 44:415--442, 2004.

\bibitem{fletcher1993totallybounded}
P.~Fletcher, W.~Hunsaker, and W.~Lindgren.
\newblock Totally bounded frame quasi-uniformities.
\newblock {\em Comment. Math. Univ. Carolin}, 34(3):529--537, 1993.

\bibitem{frith1999bicompletion}
J.~Frith and W.~Hunsaker.
\newblock Completion of quasi-uniform frames.
\newblock {\em Appl. Categ. Structures}, 7(3):261--270, 1999.

\bibitem{frith2009samuel}
J.~Frith and A.~Schauerte.
\newblock The {S}amuel compactification for quasi-uniform biframes.
\newblock {\em Topology Appl.}, 156(12):2116--2122, 2009.

\bibitem{frith1987thesis}
J.~L. Frith.
\newblock {\em Structured frames}.
\newblock PhD thesis, University of Cape Town, 1987.

\bibitem{graetzer1978book}
G.~Gr{\"a}tzer.
\newblock {\em General lattice theory}, volume~52 of {\em Mathematische Reihe}.
\newblock Springer Basel, 1978.

\bibitem{isbell1991descriptive}
J.~Isbell.
\newblock First steps in descriptive theory of locales.
\newblock {\em Trans. Amer. Math. Soc.}, 327(1):353--371, 1991.

\bibitem{isbell1991dissolute}
J.~Isbell.
\newblock On dissolute spaces.
\newblock {\em Topology Appl.}, 40(1):63--70, 1991.

\bibitem{kriz1986completion}
I.~K{\v r}{\'\i}{\v z}.
\newblock A direct description of uniform completion in locales and a
  characterization of {LT}-groups.
\newblock {\em Cah. Topol. G{\'e}om. Diff{\'e}r. Cat{\'e}g}, 27(1):19--34,
  1986.

\bibitem{kunzi1991bicompleteness}
H.-P.~A. K{\"u}nzi and N.~Ferrario.
\newblock Bicompleteness of the fine quasi-uniformity.
\newblock {\em Math. Proc. Cambridge Philos. Soc.}, 109(01):167--186, 1991.

\bibitem{manuell2015thesis}
G.~Manuell.
\newblock Congruence frames of frames and $\kappa$-frames.
\newblock Master's thesis, University of Cape Town, 2015.

\bibitem{manuell2018strictly0d}
G.~Manuell.
\newblock Strictly zero-dimensional biframes and a characterisation of
  congruence frames.
\newblock {\em Appl. Categ. Structures}, 26(4):645--655, 2018.

\bibitem{picado2012book}
J.~Picado and A.~Pultr.
\newblock {\em Frames and Locales: Topology without points}.
\newblock Frontiers in Mathematics. Springer Basel, 2012.

\bibitem{plewe2002sublocale}
T.~Plewe.
\newblock Sublocale lattices.
\newblock {\em J. Pure Appl. Algebra}, 168(2):309--326, 2002.

\bibitem{schauerte1993compactifications}
A.~Schauerte.
\newblock Biframe compactifications.
\newblock {\em Comment. Math. Univ. Carolin}, 34(3):567--574, 1993.

\bibitem{schauerte1995normality}
A.~Schauerte.
\newblock Normality for biframes.
\newblock {\em Appl. Categ. Structures}, 3(1):1--9, 1995.

\end{thebibliography}

\end{document}